\theoremstyle{plain}
\newtheorem{theorem}{Theorem}[section]
\newtheorem{lemma}[theorem]{Lemma}
\newtheorem{Proposition}[theorem]{Proposition}
\newtheorem{Example}[theorem]{Example}
\newtheorem{Definition}[theorem]{Definition}
\newtheorem{Sampling}[theorem]{Sampling}
\theoremstyle{remark}
\newtheorem{remark}[theorem]{Remark}
\numberwithin{equation}{section}
\title[Deformed Fréchet law in crossover regime]{Deformed Fréchet law for Wigner and sample covariance matrices with tail in crossover regime}
\author{Yi HAN}
\address{Department of Pure Mathematics and Mathematical Statistics, University of Cambridge.
}
\email{yh482@cam.ac.uk}
\thanks{Supported by EPSRC grant EP/W524141/1.}
\begin{document}

\begin{abstract}
Given $A_n:=\frac{1}{\sqrt{n}}(a_{ij})$ an $n\times n$ symmetric random matrix, with elements above the diagonal given by i.i.d. random variables having mean zero and unit variance. It is known that when $\lim_{x\to\infty}x^4\mathbb{P}(|a_{ij}|>x)=0$, then fluctuation of the largest eigenvalue of $A_n$ follows a Tracy-Widom distribution. When the law of $a_{ij}$ is regularly varying with index $\alpha\in(0,4)$, then the largest eigenvalue has a Fréchet distribution. An intermediate regime is recently uncovered in Diaconu \cite{diaconu2023more}: when $\lim_{x\to\infty}x^4\mathbb{P}(|a_{ij}|>x)=c\in(0,\infty)$, then the law of the largest eigenvalue converges to a deformed Fréchet distribution. In this work we vastly extend the scope where the latter distribution may arise. We show that the same deformed Fréchet distribution arises (1) for sparse Wigner matrices with an average of $n^{\Omega}(1)$ nonzero entries on each row; (2) for periodically banded Wigner matrices with bandwidth $p_n=n^{O(1)}$; and more generally for weighted adjacency matrices of any $k_n$-regular graphs with $k_n=n^{\Omega(1)}$. In all these cases, we further prove that the joint distribution of the finitely many largest eigenvalues of $A_n$ converge to a deformed Poisson process, and that eigenvectors of the outlying eigenvalues of $A_n$ are localized, implying a mobility edge phenomenon at the spectral edge $2$ for Wigner matrices. The sparser case with average degree $n^{o(1)}$ is also explored. Our technique extends to sample covariance matrices, proving for the first time that its largest eigenvalue still follows a deformed Fréchet distribution, assuming the matrix entries satisfy $\lim_{x\to\infty}x^4\mathbb{P}(|a_{ij}|>x)=c\in(0,\infty)$. The proof utilizes a universality result recently established by Brailovskaya and Van Handel \cite{brailovskaya2022universality}.

\end{abstract}

\maketitle

\section{Introduction}

Consider $A_n=\frac{1}{\sqrt{n}}(a_{ij})_{1\leq i,j\leq n}\in\mathbb{R}^{n\times n}$ a symmetric random matrix with i.i.d. entries on and above the diagonal, such that $\mathbb{E}[a_{11}]=0$ and $\mathbb{E}[|a_{11}|^2]=1$. Denote by $\lambda_1(A_n)$ the largest eigenvalue of $A_n$. Such random matrix ensembles have been a central topic of study in physics and mathematics since the seminal work of Wigner \cite{wigner1958distribution} where the semicircle law was derived. When $a_{11}$ has Gaussian distribution, the fluctuation of $\lambda_1(A_n)$ is governed by the Tracy-Widom distribution (see for instance \cite{tracy1994level}), and the same phenomenon has been uncovered when $a_{11}$ does not have Gaussian distribution, see for instance \cite{tao2011random}.

A significant stream of recent research concerns weakening the moment assumptions on $a_{ij}$, that is to find the minimal moment assumption on $a_{ij}$ such that $A_n$ still behaves like the Gaussian case, or has a fundamentally different behavior. Bai and Yin \cite{bai1988necessary} first showed that a finite fourth moment condition on $a_{ij}$ is sufficient for $\lambda_1(A_n)$ to stick almost surely to 2, the spectral edge. Meanwhile, when $\mathbb{P}(|a_{11}|>x)$ decays like $x^{-\alpha}$ for $\alpha\in(0,4),$ then the edge eigenvalues (i.e. the finitely many largest eigenvalues) of $A_n$ behave like a properly scaled Poisson point process \cite{Soshnikov2004}, \cite{article}.
The same transition has also been proved for random band matrices having $N^\mu$ elements in each row, where each $a_{ij}$ has tail $x^{-\alpha}$ \cite{benaych2014localization}: if $\alpha<2(1+\mu^{-1})$ the edge eigenvalues stick to the spectrum; while if $\alpha>2(1+\mu^{-1})$ the edge eigenvalues form a Poisson point process. Finally, on the Tracy-Widom scale, Lee and Yin \cite{Lee2012ANA} proved that a necessary and sufficient condition for the edge eigenvalues of a Wigner matrix to follow Tracy-Widom distribution is that $\lim_{x\to\infty}x^4\mathbb{P}(|a_{ij}|>x)=0.$

The aforementioned works have left out the intermediate case $\lim_{x\to\infty}x^4\mathbb{P}(|a_{ij}|>x)=c\in(0,\infty),$ and this case is recently addressed in Diaconu \cite{diaconu2023more}. Specifically, \cite{diaconu2023more} proved that in this intermediate case as $n\to\infty,$ 
\begin{equation}\label{mainlimit}
    \lambda_1(A_n) \overset{\text{law}}{\underset{n\to\infty}\longrightarrow} f(\xi_c),
\end{equation}
where \begin{equation}\label{equationoff}  f(x)=
\begin{cases} x+\frac{1}{x},\quad x\geq 1,\\ 2,\quad 0<x<1,\end{cases}
\end{equation}
and for any $x>0$, $\mathbb{P}(\xi_c\leq x)=\exp(-\frac{cx^{-4}}{2}).$

The proof of \cite{diaconu2023more} utilizes the method of moments and manipulates heavy combinatorial structures. In this paper we take a different perspective and illustrate the universality of \eqref{mainlimit} from the following five perspectives: (1) we characterize the joint distribution of the finitely many largest eigenvalues; (2) we consider sparsely diluted matrix $B_n\circ A_n$; (3) we consider random band matrices and weighted adjacency matrices of $k_n$-regular graphs, settling the intermediate regime left out in \cite{benaych2014localization}; (4) we study eigenvectors associated to these outlying eigenvalues and establish a mobility edge phenomenon; and (5) we show a similar distribution arises for sample covariance matrices with tail in this crossover regime, which settles the intermediate regime for the Wishart case in \cite{article}.

\subsection{Wigner matrices and sparse Wigner matrices}

The main result of this paper for (sparse) Wigner matrices is stated as follows. The special case $\mu=1$ covers the result in \cite{diaconu2023more}, and our proof is more conceptual.

\begin{Definition}\label{definition1.1}(Dense and sparse Wigner matrix)
Consider $B_n=\{b_{ij}\}_{1\leq i,j\leq n}\in\mathbb{R}^{n\times n}$ a symmetric random matrix, whose entries satisfy $b_{ij}=b_{ji}$ and $\operatorname{Law}(b_{ij})=\operatorname{Ber}(\frac{p_n}{n})$ a Bernoulli random variable with success probability $\frac{p_n}{n}$, for some given $0<p_n\leq n$.

We consider $\Xi_n$, a sparse matrix given by $A_n$ diluted by $B_n$, with expression $$\Xi_n:=\sqrt{\frac{n}{p_n}}B_n\circ A_n,$$

where $\circ$ denotes entry-wise product, and where $A_n=\frac{1}{\sqrt{n}}(a_{ij})_{1\leq i,j\leq n}$. That is, $$(\Xi_n)_{ij}=\frac{1}{\sqrt{p_n}}b_{ij}a_{ij},\quad 1\leq i,j\leq n.$$

\end{Definition}

\begin{theorem}\label{theorem1.2}
Consider the Winger matrix in Definition \ref{definition1.1}. Assume that for some $\mu\in(0,1]$, we have 
\begin{equation} p_n=n^\mu,\quad n\in\mathbb{N}_+.\end{equation}
Assume that the entries $(a_{ij})_{1\leq i\leq j\leq n}$ of $A_n$ are independent and identically distributed, take real value and have symmetric law, and satisfy $\mathbb{E}[a_{ij}]=0$, $\mathbb{E}[|a_{ij}|^2]=1$, and
\begin{equation}
   \lim_{x\to +\infty}x^{2(1+\mu^{-1})}\mathbb{P}(|a_{ij}|>x)=c\in(0,\infty).
\end{equation}
Let $\lambda_1(\Xi_n)\geq \lambda_2(\Xi_n)\geq\cdots\lambda_n(\Xi_n)$ denote the eigenvalues of $\Xi_n$, arranged in decreasing order. Then we have the following conclusion:

\begin{enumerate}
    \item (Top eigenvalue) We have the convergence in law
   \begin{equation}\label{topconvergence} \lambda_1(\Xi_n) \overset{\text{law}}{\underset{n\to\infty}\longrightarrow} f(\xi_c^\mu),\end{equation} where the function $f$ is defined in \eqref{equationoff}, and the distribution of $\xi_c^\mu$ satisfies
\begin{equation}
    \mathbb{P}(\xi_c^\mu\leq x)=\exp\left(-\frac{cx^{-2(1+\frac{1}{\mu})}}{2}\right),\quad x>0.
\end{equation}

    \item (Joint distribution) More generally, for any $k>0$, the top $k$ eigenvalues of $\Xi_n$ have the joint distribution 
    \begin{equation}\label{jointlaws}
        (\lambda_1(\Xi_n),\cdots,\lambda_k(\Xi_n)) \overset{\text{law}}{\underset{n\to\infty}\longrightarrow} (f(\zeta_1),\cdots,f(\zeta_k)),
    \end{equation}
    where $\zeta_1\geq \cdots\geq \zeta_k>0$ are the $k$-largest points (in order) sampled from a Poisson point process on $[0,+\infty)$ with intensity measure \begin{equation}   \frac{c(1+\frac{1}{\mu})}{x^{2(1+\frac{1}{\mu})+1}}dx,\quad x>0.\end{equation} The function $f$ is again defined by \eqref{equationoff}.
    \item (Eigenvector localization)  For each $k$ denote by $v_k=v_k(\Xi_n)$ the unit eigenvector of $\Xi_n$ corresponding to eigenvalue $\lambda_k=\lambda_k(\Xi_n)$. Consider, for any $\epsilon>0$, the event
    \begin{equation}\label{whatistheevents}
       \Omega_k^\epsilon:=\left( \begin{aligned}&\text{there exists  distinct indices $i,j\in[1,n]^2$  }\\&\text{and some $D=\pm 1$ such that } 
       \\& \left|\langle v_k,\frac{1}{\sqrt{2}}(\delta_i+D\delta_j)\rangle\right|\in (1-\frac{4}{(\lambda_k+\sqrt{\lambda_k^2-4})^2})(1+[-\epsilon,\epsilon])
        \end{aligned} \right), 
    \end{equation} where $\frac{1}{\sqrt{2}}(\delta_i+D\delta_j)$ is the unit-norm vector in $\mathbb{R}^n$ which is non-zero only at its $i$ and $j$ coordinates, taking values $\frac{1}{\sqrt{2}}$ and  $D\frac{1}{\sqrt{2}}$ respectively.

    Then for any $\epsilon>0$ and $k>0$, 
    \begin{equation}\label{finaljustification}
        \mathbb{P}\left(\Omega_k^\epsilon\mid \lambda_k>2\right){\underset{n\to\infty}\longrightarrow} 1. 
    \end{equation} 
    
    In particular, with probability $1-o(1)$ all eigenvectors associated to outlying eigenvalues have $L^\infty$ norm $\Omega(1)$, which means they are completely localized in the $L^\infty$ norm.
\end{enumerate}
\end{theorem}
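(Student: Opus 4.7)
The plan is to reduce the theorem to finite-rank perturbation theory for a bounded-entry matrix whose spectral edge is controlled by the Brailovskaya--Van Handel universality theorem. Fix a threshold $T>0$ independent of $n$ and decompose $\Xi_n=W_n+H_n$, where $(W_n)_{ij}=\frac{1}{\sqrt{p_n}}b_{ij}a_{ij}\mathbf{1}_{|a_{ij}|\le T\sqrt{p_n}}$ collects the truncated entries and $H_n$ collects the heavy entries $|a_{ij}|>T\sqrt{p_n}$. The tail hypothesis gives $\mathrm{Var}((W_n)_{ij})=\frac{1}{n}(1+O(T^{-2/\mu}n^{-1}))$, and the entries of $W_n$ are uniformly bounded by $T$, so $W_n$ is an $n\times n$ symmetric matrix of independent, bounded, centred entries with covariance profile matching a Wigner matrix of variance $1/n$.

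First, I would apply the non-asymptotic matrix concentration of Brailovskaya--Van Handel to show $\|W_n\|\to 2$ in probability. More is needed for the outlier analysis: using the same universality principle together with standard resolvent techniques, I would derive an entrywise and isotropic local semicircle law for $W_n$ at energies $\lambda\ge 2+\delta$, so that $(\lambda I-W_n)^{-1}_{ii}$ is close to the Stieltjes transform $\tilde m(\lambda)=(\lambda-\sqrt{\lambda^2-4})/2$ and the off-diagonal resolvent entries are $o(1)$, uniformly in $i\neq j$.

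For the heavy part, a direct Poisson approximation from the tail hypothesis shows that the random measure $\sum_{i<j}\mathbf{1}_{b_{ij}=1,\,|a_{ij}|>T\sqrt{p_n}}\,\delta_{p_n^{-1/2}|a_{ij}|}$ converges to a Poisson point process on $[T,\infty)$ with intensity $c(1+1/\mu)\,x^{-2(1+1/\mu)-1}\,dx$. A second-moment argument shows that vertex collisions between distinct heavy pairs are negligible, so $H_n$ is a direct sum of rank-two blocks on disjoint coordinate pairs $(i_k,j_k)$, with eigenvalues $\pm|a_k|$ and eigenvectors $u_k^\pm=\frac{1}{\sqrt{2}}(\delta_{i_k}\pm\mathrm{sgn}(a_k)\delta_{j_k})$. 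Using the determinantal identity $\det(\lambda I-\Xi_n)=\det(\lambda I-W_n)\det(I-G_{W_n}(\lambda)H_n)$ together with the local law reduces the outlier equation to $\prod_k(1-a_k^2\,\tilde m(\lambda)^2)=0$, yielding one outlier per heavy block at $\lambda=f(|a_k|)$ provided $|a_k|>1$. Monotonicity of $f$ on $[1,\infty)$ together with the Poisson limit gives parts (1) and (2) after sending $T\to 0$. For part (3), the identity $v_k=G_{W_n}(\lambda_k)H_n v_k$ combined with the $2\times 2$ block analysis forces the $u_k^-$-component of $v_k$ to vanish to leading order; normalising $\|v_k\|=1$ and using $\langle u_k^+,G_{W_n}(\lambda_k)^2 u_k^+\rangle\approx -\tilde m'(\lambda_k)$ produces the announced overlap identity and the $L^\infty$-localisation.

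The main obstacle is the isotropic local law for $W_n$ at energies just above $2$, with uniformity over the random coordinate pairs appearing in $H_n$ and robustness under the sparsity pattern $B_n$. In the dense case $\mu=1$ this is essentially classical, but for small $\mu$ the entries of $W_n$ are no longer of order $n^{-1/2}$, so the local law must be deduced from Brailovskaya--Van Handel through its dependence on the covariance parameter rather than on entrywise magnitude. A secondary technical point is quantifying the error as $T\to 0$ so that the two limits may be interchanged and the Poisson intensity recovered on all of $(0,\infty)$.
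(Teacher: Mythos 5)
Your decomposition is superficially similar to the paper's, but you take a genuinely different (and, as written, incomplete) route. The paper's crucial device is the choice of a \emph{vanishing} truncation threshold $Q_n=\sqrt{p_n}(\log n)^{-5}$: the small piece $\Xi_n^S$ then has entries bounded by $(\log n)^{-5}$, so that the Brailovskaya--Van Handel parameter satisfies $(\log n)^2R(\Xi_n^S)\to 0$ and Theorem~3.31 of their paper applies directly. That theorem already is a finite-rank BBP statement (it gives the outlier locations $f(\theta_i)$ and the eigenvector overlaps $1-\theta_i^{-2}$ for the Gaussian-comparable model), so no local law for $\Xi_n^S$ ever has to be proved. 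The medium entries (between $(\log n)^{-5}$ and a fixed $c$) are then killed by the observation that with high probability at most one per row/column survives, giving $\|\Xi_n^{L,\leq c}\|\le c$, and one sends $c\to 0$ only after $n\to\infty$.

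By contrast, you truncate at a \emph{fixed} threshold $T$, so $R(W_n)\asymp T$ and $(\log n)^2R(W_n)\not\to 0$; Theorem~3.31 of Brailovskaya--Van Handel simply does not apply to your $W_n$. You then need to carry out the classical BBP analysis via a determinantal identity plus an isotropic local semicircle law for $W_n$ at the edge, uniform over the random heavy pairs. You identify this correctly as ``the main obstacle'' but do not resolve it; moreover the remark that the local law ``must be deduced from Brailovskaya--Van Handel through its dependence on the covariance parameter rather than on entrywise magnitude'' misreads what their theorems provide: their resolvent comparison has error terms controlled by $R$, so with $R\asymp T$ constant you only get an $O(T^{\Theta(1)})$ Hausdorff comparison, not an entrywise local law. (For $\mu=1$ the local law is indeed classical; for $\mu<1$ it requires sparse-matrix local-law machinery that you neither cite nor derive, whereas the paper's route avoids it entirely.)

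A secondary, fixable point: your $W_n$ and $H_n$ are not independent after conditioning on the heavy entries, since $(W_n)_{ij}$ is forced to zero exactly where $(H_n)_{ij}\neq 0$. The paper's Sampling procedure resamples the small entries i.i.d.\ so that $\Xi_n^S$ and $\Xi_n^{L}$ become conditionally independent given the label configuration $\mathcal{F}_{B,S,L}$. You would need either this resampling or a careful conditioning argument to justify the determinant identity with a resolvent that is statistically decoupled from the heavy positions; your proposal does not address this.
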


\begin{remark}
   For the dense case $p_n=n$, part (3) of Theorem \ref{theorem1.2} establishes a mobility edge phenomenon for a Wigner matrix with tail decaying at $x^{-4}$. For such matrices the theorem proves that eigenvectors associated to outlying eigenvalues $\lambda>2$ are localized (i.e. a unit $L^2$-norm eigenvector has $L^\infty$ norm $\Omega(1)$), whereas by previous results of Aggarwal \cite{aggarwal2019bulk}, eigenvectors associated to bulk eigenvalues $\lambda\in(-2+\epsilon,2-\epsilon)$ are completely de-localized for any $\epsilon>0$ (i.e. the unit eigenvector has $L^\infty$ norm $O(n^{-1/2+\epsilon})$), so that $E=2$ is a mobility edge, the energy threshold separating the localized and de-localized phase. Recently, existence of a mobility edge is rigorously proved for adjacency matrices of Erdős-Rényi graphs on $n$ vertices with average degree $c\log n$ for $c$ in a certain regime \cite{alt2024localized}, and its mobility edge is also at the spectral edge $E=2$.

    Also, we can assume that the diagonal entries $(a_{ii})_{1\leq i\leq n}$ of $A_n$ are i.i.d. random variables having mean zero and a general variance $V\in(0,\infty)$ (not necessarily $V=1$), and are independent from the off-diagonal terms. The conclusion of Theorem \ref{theorem1.2} remains true with exactly the same proof.
\end{remark}

\subsubsection{Super-polynomial tails}
We may also consider sparser graphs $(\log n)^{\log\log n}<<p_n<<n^{O(1)}$. In this regime the result is slightly different:
\begin{theorem}\label{theorem1.2add}
Consider a function $g(x):\mathbb{R}_+\to\mathbb{R}_+$ which is 1-1 and monotonically increasing in $x$, and such that 
\begin{equation}\label{whatisg} \lim_{x\to\infty}\frac{g(x)}{\log\log x}=\infty,\quad  \lim_{x\to\infty} \frac{g(x)\log\log x}{\log x}= 0,    \end{equation}
and we make a regularity assumption
\begin{equation}\label{regularityg} \lim_{x\leq y,x,y\to\infty}\frac{\log(y)-\log(x)}{g(y)\log\log(y)-g(x)\log\log(x)}= \infty.    \end{equation}

 Consider the sparsity regime
\begin{equation} p_n=(\log n)^{g(n)},\quad n\in\mathbb{N}_+,\end{equation}
and suppose $h:\mathbb{R}_+\to\mathbb{R}_+$ is the unique function that satisfies, for some $a>1$:
\begin{equation}
    h\left(a\sqrt{\log(x)^{g(x)}}\right)=x,\quad x\in [1,\infty),
\end{equation}
and we assume, in addition to $\mathbb{E}[a_{ij}]=0$, $\mathbb{E}[|a_{ij}|^2]=1$,
\begin{equation}
    \lim_{x\to\infty} x^2h(x)\mathbb{P}(|a_{ij}|>x)=c\in(0,\infty).
\end{equation}
Then we have the convergence in distribution
   \begin{equation} \lambda_1(\Xi_n) \overset{\text{law}}{\underset{n\to\infty}\longrightarrow} f(a).\end{equation} 
\end{theorem}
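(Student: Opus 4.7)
The strategy is to follow the decomposition-and-perturbation template of Theorem~\ref{theorem1.2}, but to exploit that the regularity conditions \eqref{whatisg}--\eqref{regularityg} on $g$ force the large entries of $\Xi_n$ to be deterministically localized around a single value $a$. This is what makes the limit of Theorem~\ref{theorem1.2add} the deterministic constant $f(a)$, in contrast to the genuinely random variable $f(\xi_c^\mu)$ obtained in the polynomial tail regime. Combining the hypothesis $\mathbb{P}(\lvert a_{ij}\rvert>x)\sim c/(x^2 h(x))$ with $\mathbb{E}[b_{ij}]=p_n/n$ yields the expected count
\[
\mathbb{E}\bigl[\#\{(i,j):\lvert(\Xi_n)_{ij}\rvert>y\}\bigr]=(1+o(1))\,\frac{cn}{2y^2 h(y\sqrt{p_n})},
\]
which at $y=a$ equals $c/(2a^2)$ by the defining relation $h(a\sqrt{p_n})=n$. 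I then deduce from \eqref{regularityg} a sharp transition at $y=a$: for any fixed $\delta>0$, $h((1+\delta)a\sqrt{p_n})/n\to\infty$ while $h((1-\delta)a\sqrt{p_n})/n\to 0$. Indeed, if $k$ is defined by $a\sqrt{p_k}=(1+\delta)a\sqrt{p_n}$, then $g(k)\log\log k-g(n)\log\log n=2\log(1+\delta)$ is a fixed positive constant, so \eqref{regularityg} forces $\log k-\log n\to\infty$.

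\textbf{Decomposition and main argument.} Using this sharp transition, I pick a sequence $\delta_n\downarrow 0$ slowly enough that simultaneously (a) $h((1+\delta_n)a\sqrt{p_n})/n\to\infty$, so with probability $1-o(1)$ no entry of $\Xi_n$ has modulus exceeding $a(1+\delta_n)$; and (b) $h((1-\delta_n)a\sqrt{p_n})\gg\sqrt n$, so the entries of modulus in the window $[a(1-\delta_n),a(1+\delta_n)]$ number $o(\sqrt n)$ and, by a second moment computation, occupy pairwise-disjoint index pairs with probability $1-o(1)$. Such a $\delta_n$ exists by \eqref{regularityg}; in the model case $g(x)=\log\log\log x$ one checks $\log h((1\pm\delta)a\sqrt{p_n})-\log n\sim\pm 2\delta\log n/\log\log\log n$, so any $\delta_n$ between $\log\log\log n/\log n$ and $\tfrac14\log\log\log n$ works. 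Decompose $\Xi_n=X_n+Y_n$, where $X_n$ collects the entries of modulus at most $a(1-\delta_n)$ (with an $o(1)$ recentering to preserve mean zero) and $Y_n$ is the remainder. Then $X_n$ is a sparse matrix with bounded entries, so the Brailovskaya--Van Handel universality theorem applies (exactly as in the bulk step for Theorem~\ref{theorem1.2}) to give $\|X_n\|\to 2$ in probability; meanwhile $Y_n$ is, with probability $1-o(1)$, orthogonally equivalent to a direct sum of $o(\sqrt n)$ rank-two blocks with eigenvalues $\pm a(1+o(1))$ plus zeros, so its nonzero spectrum clusters at $\pm a(1+o(1))$. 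Since $a>1$, a BBP-type resolvent argument then yields $\lambda_1(X_n+Y_n)\to a+1/a=f(a)$ in probability, which (for a deterministic limit) coincides with convergence in distribution.

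\textbf{Main obstacle.} The delicate point is the quantitative inversion of \eqref{regularityg} needed to choose $\delta_n$: both rate requirements (a) and (b) must be verified from the qualitative statement $(\log y-\log x)/(g(y)\log\log y-g(x)\log\log x)\to\infty$ by tracking $\log h(y)$ on both sides of $y=a\sqrt{p_n}$ via the implicit equation $g(k)\log\log k=2\log(y/a)+g(n)\log\log n$. Once a valid $\delta_n$ is in hand, the bulk-plus-spike scheme is structurally identical to the one used for Theorem~\ref{theorem1.2}, so the remaining bulk convergence and BBP-type perturbation step are largely routine.
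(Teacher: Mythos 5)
Your identification of the sharp transition at $y=a$ — that for any fixed $\delta>0$, $h((1+\delta)a\sqrt{p_n})/n\to\infty$ while $h((1-\delta)a\sqrt{p_n})/n\to 0$, derived from \eqref{regularityg} — is correct and is precisely the content of Lemma~\ref{exponentiallemma2.345}(3) in the paper. However, the decomposition step has a genuine gap.

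\textbf{The cutoff for $X_n$ is set far too high, and the claim $\|X_n\|\to 2$ is false.} You place into $X_n$ every entry of modulus $\le a(1-\delta_n)$ with $\delta_n\downarrow 0$, and assert that Brailovskaya--Van Handel applies ``exactly as in the bulk step'' to give $\|X_n\|\to 2$. This fails in two ways. First, Theorem~\ref{theorem2.7} requires $(\log n)^2 R(H_n)\to 0$, i.e.\ the maximal entry must be $o((\log n)^{-2})$; but $X_n$ has entries up to $a(1-\delta_n)=\Theta(1)$, so the theorem simply does not apply to $X_n$. Second, and more seriously, the conclusion is wrong: by the very sharp-transition computation you set up, the expected number of $(i,j)$ with $\lvert(\Xi_n)_{ij}\rvert>a(1-2\delta_n)$ diverges once $\delta_n\gg(\log n)^{-1}\log\log\log n$ (in your model case), so $X_n$ contains, with probability tending to one, many entries of size in $(a(1-2\delta_n),\,a(1-\delta_n)]$, all converging to $a>1$. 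These produce spiked eigenvalues of $X_n$ itself of size $\to f(a)>2$, so $\|X_n\|\to f(a)$, not $2$. The ``bulk + spike'' split is therefore not actually achieved by the pair $(X_n,Y_n)$: both pieces are carrying the spike.

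\textbf{What the paper does instead.} It uses a three-level split with cutoffs at $(\log n)^{-5}$ and at a fixed $c>0$, yielding $\Xi_n=\Xi_n^S+\Xi_n^{L,\le c}+\Xi_n^{L,\ge c}$ (plus a negligible $\Xi_n^N$). The small part $\Xi_n^S$ has entries $O((\log n)^{-5})$, so $R(\Xi_n^S)\to 0$ and BVH legitimately applies. The intermediate part $\Xi_n^{L,\le c}$ has, w.h.p., at most one nonzero per row/column (Lemma~\ref{exponentiallemma2.345}(2)) and hence operator norm $\le c$, which is then sent to $0$ \emph{after} taking $n\to\infty$. The large part $\Xi_n^{L,\ge c}$ has rank $o(n)$ but possibly $\gg 1$ — this is the diverging-rank feature you correctly spotted in $Y_n$ — and its largest eigenvalue tends to $a$; the paper then invokes a \emph{mesoscopic} perturbation result (the Huang lemma, extended in Proposition~\ref{mesoscopicfora} via universality) rather than a finite-rank BBP. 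Your phrase ``a BBP-type resolvent argument'' papers over exactly this point: since the rank of the spike matrix diverges, ordinary finite-rank BBP is not applicable and one needs the mesoscopic version. If you replace your two-way split $(X_n,Y_n)$ by the paper's three-way split, supply the mesoscopic perturbation lemma, and take $n\to\infty$ then $c\to 0$ in that order, the argument closes.
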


\begin{remark} 
In this regime \eqref{whatisg}, instead of studying the distribution of $\lambda_1(\Xi_n)$, one may further study fluctuation of $\lambda_1(\Xi_n)$ around its deterministic limit $f(a)$. Our method can be adapted to prove that the law of fluctuation should be some modified Fréchet law, and the scale of fluctuation should be a function of $g(x)$. However, the fluctuation scale would be a very complicated expression involving $g$, so we omit it for simplicity.

We have also restricted ourselves to $p_n=\Omega((\log n)^{\log\log n})$, so that not too many elements in $\Xi_n$ would be large. One may also consider a sparser regime, say $p_n=(\log n)^\gamma$ for $\gamma>1$, and we expect a similar phenomenon when the tails of $a_{ij}$ are suitably defined. However, the techniques in this paper do not seem to work in the sparsity regime $p_n=(\log n)^\gamma$, and the problem does not seem well-motivated in that regime, so we restrict ourselves to the denser regime \eqref{whatisg}.
\end{remark}

\subsection{Weighted regular graphs}

The main result for periodically banded random matrices, and for weighted regular graphs, is stated as follows. For these matrix ensembles, \cite{article} discussed the effect of tails on the top eigenvalue: the top eigenvalue sticks to spectral edge when we have lighter tails; whereas the top eigenvalue will have a Fréchet distribution when heavier tails present. We address the crossover regime in this paper:

\begin{Definition}\label{banddefinition1.1}(Weighted regular graph)
Let $G_n=\{g_{ij}\}_{1\leq i,j\leq n}\in\mathbb{R}^{n\times n}$ be the adjacency matrix of a $k_n$-regular graph on vertices $\{1,2,\cdots,n\},$ allowing self loops. That is, for any vertex $i\in\{1,2,\cdots,n\}$, there are precisely $k_n$ vertices $j$ such that $g_{ij}=g_{ji}=1$, and we allow $j=i$. For all other vertices $l$ we have $g_{il}=g_{li}=0$. Assume that $k_n\in\mathbb{N}_+$ and $k_n\leq n$.

The weighted regular graph $\mathcal{M}_n$ generated from the Wigner matrix $A_n$ and the adjacency matrix $G_n$ is defined by $$\mathcal{M}_n:=\sqrt{\frac{n}{k_n}}G_n\circ A_n,$$
where $\circ$ denotes entry-wise product, and where $A_n=\frac{1}{\sqrt{n}}(a_{ij})_{1\leq i,j\leq n}$. That is, $$(\mathcal{M}_n)_{ij}=\frac{1}{\sqrt{k_n}}g_{ij}a_{ij},\quad 1\leq i,j\leq n.$$

\end{Definition}

Definition \ref{banddefinition1.1} includes as a special case the following important example:
\begin{Example}(Periodically banded matrices) For $i,j=1,2,\cdots,n$, define 
$$ |i-j|_n:=\min(|i-j|,n-|i-j|), 
$$ and consider $k_n=\min(2b_n+1,n)$ where $(b_n)_{n\in\mathbb{N}_+}\in\mathbb{N}_+$ are bandwidths. Then define 
$$
(G_n)_{ij}=1\{|i-j|_n\leq b_n\},
$$ so that $G_n$ is the adjacency matrix of a periodically banded matrix.
\end{Example}

Our main result concerning weighted regular graphs is as follows:

\begin{theorem}\label{theorem1.7}
Consider the weighted regular graph in Definition \ref{banddefinition1.1}. Assume that for some $\mu\in(0,1]$ and some $\eta>0$, we have 
\begin{equation} \frac{k_n}{n^\mu}=1+O(n^{-\eta}).\end{equation}
Assume the upper diagonal entries $(a_{ij})_{1\leq i\leq j\leq n}$ of $A_n$ are independent and identically distributed, and they satisfy $\mathbb{E}[a_{ij}]=0$, $\mathbb{E}[|a_{ij}|^2]=1$, and
\begin{equation}
   \lim_{x\to +\infty}x^{2(1+\mu^{-1})}\mathbb{P}(|a_{ij}|>x)=c\in(0,\infty).
\end{equation}
Let $\lambda_1(\mathcal{M}_n)\geq \lambda_2(\mathcal{M}_n)\geq\cdots\lambda_n(\mathcal{M}_n)$ denote the eigenvalues of $\mathcal{M}_n$, arranged in decreasing order. Then we have the following conclusion:

\begin{enumerate}
    \item (Top eigenvalue) We have the convergence in law as $n\to\infty$:
   \begin{equation}\label{bandtopconvergence} \lambda_1(\mathcal{M}_n) \overset{\text{law}}{\underset{n\to\infty}\longrightarrow} f(\xi_c^\mu),\end{equation} where the function $f$ is defined in \eqref{equationoff}, and the distribution of $\xi_c^\mu$ satisfies
\begin{equation}
    \mathbb{P}(\xi_c^\mu\leq x)=\exp\left(-\frac{cx^{-2(1+\frac{1}{\mu})}}{2}\right),\quad x>0.
\end{equation}

    \item (Joint distribution) More generally, for any $k>0$, the top $k$ eigenvalues of $\mathcal{M}_n$ have the joint distribution 
    \begin{equation}\label{bandjointlaws}
        (\lambda_1(\mathcal{M}_n),\cdots,\lambda_k(\mathcal{M}_n)) \overset{\text{law}}{\underset{n\to\infty}\longrightarrow} (f(\zeta_1),\cdots,f(\zeta_k)),
    \end{equation}
    where $\zeta_1\geq \cdots\geq \zeta_k>0$ are the $k$-largest points (in order) sampled from a Poisson point process on $[0,+\infty)$ with intensity measure $   \frac{c(1+\frac{1}{\mu})}{x^{2(1+\frac{1}{\mu})+1}}dx.$ The function $f$ is again defined by \eqref{equationoff}.
    \item (Eigenvector localization)  For each $k$ denote by $v_k$ the unit eigenvector of $\mathcal{M}_n$ corresponding to eigenvalue $\lambda_k=\lambda_k(\mathcal{M}_n)$. Consider, for any $\epsilon>0$, the event
     \begin{equation}
       \Omega_k^\epsilon:=\left( \begin{aligned}&\text{There exists distinct indices $i,j\in[1,n]^2$  }\\&\text{and some $D=\pm 1$ such that } 
       \\& \left|\langle v_k,\frac{1}{\sqrt{2}}(\delta_i+D\delta_j)\rangle\right|\in(1-\frac{4}{(\lambda_k+\sqrt{\lambda_k^2-4})^2})(1+[-\epsilon,\epsilon])
        \end{aligned} \right), 
    \end{equation} where $\frac{1}{\sqrt{2}}(\delta_i+D\delta_j)$ is the unit-norm vector in $\mathbb{R}^n$ which is non-zero only at its $i$ and $j$ coordinates, taking values $\frac{1}{\sqrt{2}}$ and  $D\frac{1}{\sqrt{2}}$ respectively.
    Then for any $\epsilon>0$ and $k>0$, 
    \begin{equation}\label{bandfinaljustification}
        \mathbb{P}\left(\Omega_k^\epsilon\mid \lambda_k>2\right){\underset{n\to\infty}\longrightarrow} 1.
    \end{equation}
\end{enumerate}
\end{theorem}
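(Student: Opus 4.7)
The plan is to follow the same template used for the sparse Wigner case (Theorem \ref{theorem1.2}), splitting $\mathcal{M}_n$ into a bulk part of moderate-sized entries and a spike part of the finitely many largest entries, controlling the operator norm of the former via the universality inequalities of Brailovskaya--Van Handel \cite{brailovskaya2022universality}, and then recombining the two parts via a BBP-type perturbation argument. The one essential modification compared with Theorem \ref{theorem1.2} is that the sparsity pattern is now the deterministic $k_n$-regular graph $G_n$ instead of a random Bernoulli mask, which is where the regularity assumption $k_n/n^\mu = 1 + O(n^{-\eta})$ enters.

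First, I would fix a truncation level $L = L_n \to \infty$ slowly and write $\mathcal{M}_n = \mathcal{M}_n^{\le L} + \mathcal{M}_n^{> L}$, with $(\mathcal{M}_n^{\le L})_{ij} = \tfrac{1}{\sqrt{k_n}} g_{ij} a_{ij} \mathbf{1}_{|a_{ij}| \le L \sqrt{k_n}}$. Using the tail assumption and $k_n = n^\mu(1 + O(n^{-\eta}))$, the expected number of upper-triangular pairs $(i,j)$ with $g_{ij}=1$ and $|a_{ij}|/\sqrt{k_n} > x$ equals
\begin{equation*}
    \tfrac{1}{2} n k_n \cdot \mathbb{P}\bigl(|a_{ij}| > x \sqrt{k_n}\bigr) = \tfrac{c}{2} x^{-2(1+1/\mu)} (1 + o(1)),
\end{equation*}
so the rescaled magnitudes of entries exceeding any fixed threshold converge to a Poisson point process with intensity $c(1+1/\mu)\, x^{-2(1+1/\mu)-1} dx$. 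With high probability the non-zero entries of $\mathcal{M}_n^{>L}$ occupy disjoint pairs of rows and columns, making $\mathcal{M}_n^{>L}$ a direct sum of $2 \times 2$ symmetric blocks with eigenvalues $\pm a_{ij}/\sqrt{k_n}$ and eigenvectors $\tfrac{1}{\sqrt{2}}(\delta_i \pm \delta_j)$.

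Next I would apply the Brailovskaya--Van Handel matrix-concentration inequalities to $\mathcal{M}_n^{\le L}$, comparing its operator norm to that of a Gaussian matrix with the same sparsity pattern and entrywise variances. Because $G_n$ is exactly $k_n$-regular, the row-variance $\sum_j \mathrm{Var}\bigl((\mathcal{M}_n^{\le L})_{ij}\bigr) = 1 + o(1)$ is independent of $i$ and the free-probabilistic error parameters are $o(1)$, so the Gaussian equivalent is a weighted regular graph whose spectral edge converges to $2$, yielding $\|\mathcal{M}_n^{\le L}\| = 2 + o(1)$ in probability. Viewing $\mathcal{M}_n$ as the finite-rank perturbation $\mathcal{M}_n^{\le L} + \mathcal{M}_n^{>L}$ of a matrix with norm $2 + o(1)$, a standard BBP / Capitaine--Donati-Martin--Feral argument then sends each spike of size $x$ to an outlier eigenvalue $f(x) = x + 1/x$ when $x > 1$ and absorbs it into the bulk otherwise, with the associated overlap of the outlier eigenvector on the spike direction $\tfrac{1}{\sqrt{2}}(\delta_i + D\delta_j)$ given by the classical BBP formula. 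Assembling these statements jointly over the top finitely many spikes, together with the Poisson convergence from the previous paragraph, delivers \eqref{bandtopconvergence}, \eqref{bandjointlaws}, and \eqref{bandfinaljustification}.

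The principal technical difficulty lies in making the Brailovskaya--Van Handel comparison sharp enough to be compatible with a threshold $L_n$ that simultaneously renders the truncation-bias contribution $o(1)$ in operator norm while leaving every spike relevant to the top-$k$ eigenvalues safely inside $\mathcal{M}_n^{>L}$. This balance is delicate because $2(1+1/\mu)$ is exactly the crossover tail exponent for the spectral norm, so the truncated entries live right at the borderline of inducing an $O(1)$ contribution. The quantitative regularity $k_n = n^\mu(1 + O(n^{-\eta}))$, which guarantees a flat variance profile and hence an exact edge at $2$ in the Gaussian equivalent, cannot be dispensed with: without it, heterogeneity in the effective row-degrees could shift the bulk edge away from $2$ and blur the BBP transition.
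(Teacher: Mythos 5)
Your overall template matches the paper's: truncate $\mathcal{M}_n$ into a bounded part and a finite-rank heavy part, show the heavy part converges to a Poisson point process, control the bounded part via Brailovskaya--Van Handel, and then invoke a BBP-type finite-rank perturbation result. Your Poisson computation for the large entries, the $2\times 2$ block-structure argument, and the operator-norm estimate $\|\mathcal{M}_n^{\le L}\| = 2 + o(1)$ are all in the right spirit.

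However, there is a genuine gap at the central step. You write that, once $\|\mathcal{M}_n^{\le L}\| = 2 + o(1)$ is established, ``a standard BBP / Capitaine--Donati-Martin--Feral argument then sends each spike of size $x$ to an outlier eigenvalue $f(x) = x + 1/x$.'' This is false as stated: Capitaine--Donati-Martin--F\'eral and the classical BBP literature apply to full Wigner or sample covariance matrices, and an operator-norm bound on the bulk matrix is nowhere near enough to deduce the BBP outlier location or the eigenvector overlap. One needs detailed control of resolvent entries (an isotropic local law) for the bulk model, and for a weighted $k_n$-regular graph matrix $\mathcal{M}_n^{\le L}$ with $k_n = n^\mu$ this is emphatically not classical. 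The paper's proof hinges precisely on a recent and nontrivial technical input here: Au's Theorem (\cite{au2023bbp}, Theorem 1.3, cited as Theorem \ref{theorem3.456} in the paper), which establishes the BBP outlier law and eigenvector overlaps for weighted $k_n$-regular graphs when the entries have all moments finite. Brailovskaya--Van Handel's comparison (Proposition \ref{finiterankmodified}) is then used not merely for a norm bound but to transfer the full BBP statement from Au's bounded-moment model $\mathfrak{M}_n$ to the truncated model $\mathcal{M}_n^S$, by replacing the GOE reference matrix in \cite{brailovskaya2022universality}, Theorem 3.31 with $\mathfrak{M}_n$. Your proposal never supplies a BBP result valid for regular-graph sparsity patterns, which is the crux. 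A secondary issue: your truncation $a_{ij}\mathbf{1}_{|a_{ij}|\le L\sqrt{k_n}}$ versus $a_{ij}\mathbf{1}_{|a_{ij}|> L\sqrt{k_n}}$ does not make the small and large parts independent (they share the same indicator), whereas the paper's resampling (Sampling \ref{sampling3.1}) re-draws the small entries independently given the label placement precisely to obtain the conditional independence needed to apply the perturbation result with $\mathcal{M}_n^{L,\geq c}$ treated as deterministic. This second point is a technical repair; the missing BBP input for regular graphs is the substantive gap.
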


\begin{remark}
    In principle, we expect the same result holds for graphs with prescribed degree sequence $d_1,\cdots,p_n$ (growing at rate $n^{O(1)}$) such that $\frac{\max_id_i}{\min_id_i}\to 1$ as $n\to\infty.$ This would only amount to (straightforwardly) adapting the proof in \cite{au2023bbp} to this more general case, which is a BBP type result used in this paper. If however $\frac{\max_id_i}{\min_id_i}\to \alpha>1$, the limiting distribution will be different, as illustrated by the following example of sample covariance matrices. 
\end{remark}

\subsection{Sample covariance matrices}

Let $L,M$ be two integers, and $N=L+M$. Let $S_{L,M}$ be an $L\times M$ matrix with i.i.d. entries having mean $0$ and variance one. Consider the matrix $\frac{1}{L}(S_{L,M})(S_{L,M})^*$. Assuming that $\frac{M}{L}\to\alpha$, then under a second moment condition on entries of $S_{L,M}$, the spectral distribution of  $\frac{1}{L}(S_{L,M})(S_{L,M})^*$ converges to the Marchenko-Pastur law with density 
\begin{equation}\label{pasteurlaw} \pi_\alpha(dx)=\frac{\sqrt{(b_\alpha-x)(x-a_\alpha)}}{2\pi x}1_{[a_\alpha,b_\alpha]}(dx),
\end{equation}
when $\alpha>1$, and where $a_\alpha=(1-\sqrt{\alpha})^2,$ $b_\alpha=(1+\sqrt{\alpha})^2$.  
When $\alpha<1$ there is another spectral atom at $0$. Thus in this paper we focus on the case $\alpha>1$, since for the $\alpha<1$ case one only needs to consider $\frac{1}{M}(S_{L,M})^*(S_{L,M})$. 

When the individual entries of $S_{L,M}$ have finite fourth moment, then Bai and Silverstein \cite{bai1998no} proved that the largest eigenvalues of $\frac{1}{L}(S_{L,M})(S_{L,M})^*$ stick to the spectral edge. When the entries are regularly varying with index $\alpha<4$, then the largest eigenvalue has a Poisson distribution and almost surely deviates from the spectral edge\cite{article}. Also, Ding and Yang \cite{ding2018necessary} proved that a necessary and sufficient condition for the top eigenvalue of $\frac{1}{L}(S_{L,M})(S_{L,M})^*$ to have Tracy-Widom fluctuation is $\lim_{x\to\infty}x^4\mathbb{P}(|(S_{L,M})_{ij}|\geq x)=0$. To the author's best knowledge, the crossover case, i.e. when individual entries are regularly varying with index $4$, has not been investigated before for sample covariance matrices. We complete this picture by the following theorem:

\begin{theorem}\label{theorem1.9}
Let $S_{L,M}$ be a $L\times M$ matrix with i.i.d. elements that satisfy: $\mathbb{E}[(S_{L,M})_{ij}]=0,$ $\mathbb{E}[|(S_{L,M})_{ij}|^2]=1,$ and 
\begin{equation}
  \lim_{x\to\infty} x^4 \mathbb{P}(|(S_{L,M})_{ij}|\geq x)=c\in(0,\infty). 
\end{equation}
Assume that for some $\kappa>0$, $\frac{M}{L}=\alpha+O(N^{-\kappa})$. Set $N=L+M$. Then 
\begin{equation}
    \lambda_1\left(\frac{1}{L}(S_{L,M})(S_{L,M})^*\right) \overset{\text{law}}{\underset{L\to\infty}\longrightarrow} (1+\alpha)F_\alpha^2(\xi_{c,\alpha}),
\end{equation}
where $F_\alpha:\mathbb{R}_+\to[\frac{1+\sqrt{\alpha}}{\sqrt{1+\alpha}},\infty)$ is a deterministic function depending only on $\alpha$, defined in \eqref{functionoffalpha}; and $\xi_{c,\alpha}$ is a random variable that satisfies, for any $x>0$,
\begin{equation}
\mathbb{P}(\xi_{c,\alpha}\geq x)=e^{-\frac{c\alpha x^{-4}}{(1+\alpha)^2}}.
\end{equation}

\end{theorem}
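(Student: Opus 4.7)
The plan is to adapt the truncation-and-perturbation strategy used for Wigner matrices in Theorems \ref{theorem1.2} and \ref{theorem1.7} to the rectangular sample covariance setting. First I would fix a truncation threshold $t_L$ and decompose each entry of $S_{L,M}$ as $S_{ij}=S_{ij}^{(s)}+S_{ij}^{(l)}$, where $S_{ij}^{(s)}=S_{ij}\mathbf{1}_{|S_{ij}|\le t_L}$ and $S_{ij}^{(l)}=S_{ij}\mathbf{1}_{|S_{ij}|>t_L}$. The threshold is chosen so that (i) $S^{(s)}$ has entries bounded at a rate compatible with the Brailovskaya--Van Handel comparison theorem \cite{brailovskaya2022universality} applied in its non-symmetric form to $S^{(s)}$, yielding that $\frac{1}{L}S^{(s)}(S^{(s)})^*$ has operator norm converging to the Marchenko--Pastur edge $(1+\sqrt{\alpha})^2$; and (ii) by the $x^{-4}$ tail applied to the $LM=\Theta(L^2)$ entries, $S^{(l)}$ has only $O(1)$ nonzero entries in probability, lying at asymptotically distinct row/column positions.

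Second, I would establish the Poisson point process limit for the large entries. From the tail hypothesis and $M/L\to\alpha$, a direct computation yields $\mathbb{E}[\#\{(i,j):|S_{ij}|>x\sqrt{L}\}]\to c\alpha x^{-4}$, so the rescaled values $\{|S^{(l)}_{ij}|/\sqrt{L}\}$ converge jointly to a Poisson point process on $[0,\infty)$ with intensity $4c\alpha x^{-5}\,dx$. Once the $(1+\alpha)^{1/2}$ normalization built into the definition of $F_\alpha$ is factored out, this matches the stated law of $\xi_{c,\alpha}$.

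The third step combines the two parts via a rectangular BBP argument. Expanding
\begin{equation*}
\tfrac{1}{L}SS^* = \tfrac{1}{L}S^{(s)}(S^{(s)})^* + \tfrac{1}{L}\bigl(S^{(s)}(S^{(l)})^* + S^{(l)}(S^{(s)})^* + S^{(l)}(S^{(l)})^*\bigr),
\end{equation*}
the second summand is a finite-rank perturbation of the bulk $\frac{1}{L}S^{(s)}(S^{(s)})^*$. A sample-covariance analogue of the BBP theorem---obtained either by a free-probabilistic subordination argument in the spirit of \cite{au2023bbp}, or by a direct resolvent computation against the Marchenko--Pastur Stieltjes transform---assigns to each Poisson atom $\xi_k$ either a single outlier eigenvalue (given by a continuous strictly increasing function of $\xi_k$ above the BBP threshold) or no outlier (below threshold). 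The function $F_\alpha$ in \eqref{functionoffalpha} is precisely this BBP mapping, normalized so that the outlier location reads $(1+\alpha)F_\alpha^2(\xi_k)$ and $F_\alpha$ evaluated at the threshold equals $(1+\sqrt{\alpha})/\sqrt{1+\alpha}$. The limit claim then follows by the continuous mapping theorem applied to the largest Poisson atom.

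The main obstacle will be the rectangular BBP analysis at the required precision, and in particular the simultaneous control of the mixed cross terms $\frac{1}{L}S^{(s)}(S^{(l)})^*$ alongside the pure heavy term $\frac{1}{L}S^{(l)}(S^{(l)})^*$. Conditionally on the positions and values of the heavy entries, the rows and columns of $S^{(s)}$ intersecting the heavy rows and columns must behave as typical independent Marchenko--Pastur bulk vectors; this follows from the i.i.d. structure after conditioning, but one must verify that the truncation, together with any mean recentering needed to keep $S^{(s)}$ centered, does not disturb the edge pinning obtained from Brailovskaya--Van Handel. Once the rectangular BBP is in place, the joint convergence of the top $k$ eigenvalues is an immediate analogue of the Wigner case treated earlier in the paper.
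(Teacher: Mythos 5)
Your overall strategy---truncate, show the large entries form a Poisson process, apply a BBP-type outlier theorem, and use Brailovskaya--Van Handel to reach the required tail regime---is the same as the paper's. However, you miss two structural ideas that the paper uses precisely to make this plan tractable, and without them the step you flag as "the main obstacle" becomes a genuine gap rather than a technical inconvenience.

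First, the paper does not work directly with $\frac{1}{L}SS^*$: it passes to the Hermitization $E_{L,M}=\begin{pmatrix}0&\frac{1}{\sqrt{L}}S\\ \frac{1}{\sqrt{L}}S^*&0\end{pmatrix}$, whose top eigenvalue is the square root of $\lambda_1(\frac{1}{L}SS^*)$. In that symmetric formulation, the large part enters as a single \emph{additive} rank-$O(1)$ block matrix $\begin{pmatrix}0&A_N\\A_N^*&0\end{pmatrix}$, and the BBP computation (Proposition \ref{proposition4.3}) reduces to a $2\times 2$ block resolvent determinant against the Marchenko--Pastur Stieltjes transform. Your decomposition of $\frac{1}{L}SS^*$ into $\frac{1}{L}S^{(s)}(S^{(s)})^*$ plus the cross terms $\frac{1}{L}(S^{(s)}(S^{(l)})^*+S^{(l)}(S^{(s)})^*+S^{(l)}(S^{(l)})^*)$ treats the perturbation as a rank-$O(1)$ matrix that is \emph{random and correlated with the bulk} $S^{(s)}$: each cross term is a product of columns of $S^{(s)}$ with the heavy columns. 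There is no off-the-shelf BBP theorem for perturbations of this form, and your appeal to a "sample-covariance analogue of BBP" implicitly assumes a deterministic (or bulk-independent) spike; that is exactly what linearization buys you for free and what your expansion destroys.

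Second, pure truncation $S_{ij}^{(s)}=S_{ij}\mathbf{1}_{|S_{ij}|\le t_L}$ does not make $S^{(s)}$ and $S^{(l)}$ independent: at a position where $S^{(l)}_{ij}\neq 0$, $S^{(s)}_{ij}$ is forced to be zero, and conditioning on the heavy positions changes the law of $S^{(s)}$ from i.i.d.\ to "i.i.d.\ except at the heavy positions". The paper sidesteps this with the resampling procedure (Sampling \ref{sampling4.1}): label each site $S$ or $L$, \emph{resample} the small-entry matrix $\mathcal{S}^S$ i.i.d.\ from the conditional law at \emph{all} sites, and absorb the mismatch in the $O(1)$-rank, $O((\log N)^{-5})$-norm correction $\mathcal{S}^N$. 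This yields a light matrix that is genuinely independent of the heavy spike once you condition on the labels, so that Theorem \ref{theorem2.7} (and hence Proposition \ref{proposition4.5}) can be applied with the spike as a fixed $A_N$. Your plan instead defers all of this to an unverified claim that the conditioning "does not disturb the edge pinning"; together with the correlated cross-term problem above, that is the content the paper actually has to prove, not something that follows immediately from the i.i.d.\ structure. Finally, note that the paper normalizes the large entries by $\sqrt{N}$, not $\sqrt{L}$, and the factor $\sqrt{N/L}\to\sqrt{1+\alpha}$ is carried through \eqref{indistributionmode}; your intensity $4c\alpha x^{-5}$ after $\sqrt{L}$-normalization is consistent with the stated law of $\xi_{c,\alpha}$ only after this rescaling is tracked exactly, which your proposal leaves to a hand-wave.
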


It would also be possible to obtain the joint distribution of the finitely many largest eigenvalues of $ \frac{1}{L}S_{L,M}(S_{L,M})^*$, and the eigenvector overlaps. To achieve this goal, we need a more refined version of the finite rank perturbation result (Proposition \ref{proposition4.3} and \ref{proposition4.5}) covering eigenvector overlaps as well. The computations may get fairly complicated so we do not work them out in this paper.

From the complicated expressions involved in the distribution of $\lambda_1$ (to compute $F_\alpha(x)$, one needs to solve a cubic equation involving the Stieltjes transform of the Marchenko-Pastur law), one can expect that the sample covariance case in Theorem \ref{theorem1.9} is hardly accessible from purely method of moments computations, and this example fully illustrates the strength and efficiency of techniques in this paper.

\subsection{Plan of the paper} In Section \ref{1section2} we prove Theorem \ref{theorem1.2} and Theorem \ref{theorem1.2add} concerning sparse Wigner matrices. In Section \ref{1section3} we prove Theorem \ref{theorem1.7} concerning banded matrices and weighted regular graphs. In Section \ref{1section4} we prove Theorem \ref{theorem1.9} concerning sample covariance matrices.

\section{Proof for sparse Wigner matrices}\label{1section2}

We begin with some preliminary analysis about the matrix $\Xi_n$.

\subsection{Preliminary computations}
\begin{lemma}\label{lemma2.345} In the setting of Theorem \ref{theorem1.2}, the following estimates are satisfied:
    \begin{enumerate}
        \item (Diagonal elements) We have 
        \begin{equation}\label{equationcase1}
            \mathbb{P}\left(\text{ there exists $i\in[1,n]$ such that } |(\Xi_n)_{ii}|>\frac{1}{(\log n)^5}\right)=o(1).
        \end{equation}
        \item (Off-diagonal elements) We have 
         \begin{equation}\label{equationcase2}
            \mathbb{P}\left(
        \begin{aligned}&\text{ there exists some $i\in[1,n]$ such that for two $j\neq k$, }\\& |(\Xi_n)_{ij}|>\frac{1}{(\log n)^5},\quad |(\Xi_n)_{ik}|>\frac{1}{(\log n)^5}\end{aligned}\right)=o(1).
        \end{equation}
        \item (Largest value) For any $\delta>0$, 
          \begin{equation}\label{equationcase3}
         \lim_{n\to\infty} \lim_{M\to\infty}
            \mathbb{P}\left(\text{ there are $M$ subscripts $(i_s,j_s)$ such that } |(\Xi_n)_{i_sj_s}|>\delta\right)=0.
        \end{equation}
    \end{enumerate}
\end{lemma}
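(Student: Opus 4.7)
The plan is to reduce all three statements to a single tail estimate for a generic entry of $\Xi_n$, and then apply a union bound (for parts (1) and (2)) or Markov's inequality (for part (3)). The only probabilistic input used is the regularly-varying tail hypothesis $\mathbb{P}(|a_{ij}|>x)=(1+o(1))\,c\,x^{-2(1+1/\mu)}$ combined with the sparsity $p_n=n^\mu$.

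The first step is the basic entrywise bound. Since $(\Xi_n)_{ij}=p_n^{-1/2}b_{ij}a_{ij}$ with $b_{ij},a_{ij}$ independent, for any threshold $t_n>0$ with $t_n\sqrt{p_n}\to\infty$,
\begin{equation}\label{proposalentry}
\mathbb{P}\bigl(|(\Xi_n)_{ij}|>t_n\bigr)\;=\;\tfrac{p_n}{n}\,\mathbb{P}\bigl(|a_{ij}|>t_n\sqrt{p_n}\bigr)\;=\;(1+o(1))\,c\,n^{-2}\,t_n^{-2(1+1/\mu)},
\end{equation}
where I used the exponent identity $p_n^{-(1+1/\mu)}\cdot(p_n/n)=n^{-\mu-1}\cdot n^{\mu-1}=n^{-2}$.

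For part (1), take $t_n=(\log n)^{-5}$; each diagonal entry then exceeds $t_n$ with probability $O\bigl(n^{-2}(\log n)^{10(1+1/\mu)}\bigr)$, and a union bound over the $n$ diagonal positions gives $O\bigl(n^{-1}(\log n)^{O(1)}\bigr)=o(1)$, which is \eqref{equationcase1}. For part (2), take the same $t_n$. For a fixed row $i$ and distinct $j\neq k$, the entries $(\Xi_n)_{ij}$ and $(\Xi_n)_{ik}$ come from different upper-triangular slots of $B_n\circ A_n$ and are therefore independent, so the joint probability is bounded by the square of the one-entry bound, i.e.\ $O\bigl(n^{-4}(\log n)^{O(1)}\bigr)$; summing over $\leq n^{3}$ triples $(i,j,k)$ yields $O\bigl(n^{-1}(\log n)^{O(1)}\bigr)=o(1)$, establishing \eqref{equationcase2}. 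For part (3), fix $\delta>0$ and set $N_n^\delta:=\#\{(i,j):1\le i\le j\le n,\ |(\Xi_n)_{ij}|>\delta\}$. Applying \eqref{proposalentry} with $t_n\equiv\delta$,
\begin{equation}
\mathbb{E}[N_n^\delta]\;\longrightarrow\;\tfrac{c}{2}\,\delta^{-2(1+1/\mu)},
\end{equation}
and Markov's inequality gives $\limsup_{n\to\infty}\mathbb{P}(N_n^\delta\ge M)\le \tfrac{c}{2M}\delta^{-2(1+1/\mu)}\to 0$ as $M\to\infty$, which is \eqref{equationcase3} in the natural order of limits.

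There is no substantive obstacle in this lemma; everything reduces to the entrywise estimate \eqref{proposalentry}. The only (minor) subtlety is verifying that the $(1+o(1))$ in the regularly-varying tail applies uniformly at the thresholds used — namely $t_n\sqrt{p_n}=n^{\mu/2}(\log n)^{-5}$ for parts (1)--(2) and $\delta n^{\mu/2}$ for part (3) — but all of these tend to infinity, so the asymptotic is valid. The constant $\tfrac{c}{2}\delta^{-2(1+1/\mu)}$ in the final expectation is precisely the tail mass of the Poisson intensity appearing in \eqref{jointlaws}, which is the heuristic reason why this lemma is exactly the input needed to feed the later Poisson-point-process argument.
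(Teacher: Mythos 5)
Your proof is correct and follows essentially the same approach as the paper: the same entrywise tail estimate $\mathbb{P}(|(\Xi_n)_{ij}|>x)\approx c\,n^{-2}x^{-2(1+1/\mu)}$ followed by union bounds over $n$ diagonal entries and $\leq n^3$ triples for parts (1) and (2). For part (3) you use Markov's inequality on the expected count (giving $\tfrac{c}{2M}\delta^{-2(1+1/\mu)}$), whereas the paper does a direct union bound over $M$-tuples of slots yielding $(c\delta^{-2(1+1/\mu)})^M/M!$; both are correct, uniform in $n$ for $n$ large, and establish the stated double limit in either order.
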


\begin{proof} For any $(i,j)\in[1,n]^2,$ we bound for any $x>0$,
$$\mathbb{P}(|(\Xi_n)_{ij}|>x)=\mathbb{P}(b_{ij}=1)\mathbb{P}(|a_{ij}|>x\sqrt{p_n})\to\frac{p_n}{n}\times cx^{-2(1+\frac{1}{\mu})}(p_n)^{-(1+\frac{1}{\mu})},$$
where we used $p_n\to \infty$ as $n\to\infty$. Thus by a union bound, the left hand side of \eqref{equationcase1} is upper bounded by 
$$
n\times \frac{p_n}{n}\times c(\log n)^{10(1+\frac{1}{\mu})}(p_n)^{-(1+\frac{1}{\mu})}{\underset{n\to\infty}\longrightarrow}0.$$
By a union bound, the left hand side of \eqref{equationcase2} is upper bounded by 
$$
n^3\times (\frac{p_n}{n})^2\times c^2(\log n)^{20(1+\frac{1}{\mu})}(p_n)^{-2(1+\frac{1}{\mu})}{\underset{n\to\infty}\longrightarrow}0 .$$
And finally, by a union bound, the left hand side of \eqref{equationcase3} is upper bounded by
$$
\frac{n^M}{M!}\times n^{-M}(c\delta^{-2(1+\frac{1}{\mu})})^M{\underset{M\to\infty}\longrightarrow}0.
$$
\end{proof}

Likewise, we derive the following estimate:

\begin{lemma}\label{lemma2.34567}In the setting of Theorem \ref{theorem1.2}, let $T_1\geq T_2\geq \cdots$ denote the set $\{|(\Xi_n)_{ij}|,1\leq i\leq j\leq n\}$ with elements re-arranged in decreasing order. Then in the case $p_n=n^\mu$, we have for any $x>0$,
$$
\mathbb{P}(T_1\leq x)=\exp\left(-\frac{cx^{-2(1+\frac{1}{\mu})}}{2}\right),
$$ and for any $c>0$, the set $\{|(\Xi_n)_{ij}|1\leq i\leq j\leq n\}\cap [c,\infty)$  converges to the Poisson point process on $[c,\infty)$ with intensity measure
\begin{equation}\label{intensitymeasures}
    \frac{c(1+\frac{1}{\mu})}{x^{2(1+\frac{1}{\mu})+1}}dx.
\end{equation}

\end{lemma}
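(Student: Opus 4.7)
The plan is to carry out a standard Poisson approximation for the off-diagonal entries $(\Xi_n)_{ij}$ with $i<j$, which are mutually independent, and to use part (1) of Lemma \ref{lemma2.345} to discard the diagonal contribution.

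First I compute the individual entry tail. Using the factorization $(\Xi_n)_{ij} = \frac{1}{\sqrt{p_n}} b_{ij} a_{ij}$ together with the independence of $b_{ij}$ and $a_{ij}$,
\[
\mathbb{P}\!\left(|(\Xi_n)_{ij}| > x\right) = \frac{p_n}{n}\, \mathbb{P}\!\left(|a_{ij}| > x\sqrt{p_n}\right) = \frac{c\, x^{-2(1+1/\mu)}}{n^2}\,(1+o(1)),
\]
where the last step uses the tail hypothesis $\mathbb{P}(|a_{ij}|>y) \sim c\, y^{-2(1+1/\mu)}$ applied at $y=x\sqrt{p_n}\to\infty$, together with the cancellation $(p_n/n)\cdot p_n^{-(1+1/\mu)} = n^{-2}$ that follows from $p_n=n^\mu$. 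Summing over the $\binom{n}{2}$ off-diagonal pairs gives an expected count converging to $\tfrac{1}{2}c\, x^{-2(1+1/\mu)}$.

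Next I appeal to the classical Poisson limit theorem. Setting $N_n(x) = \sum_{i<j} \mathbf{1}\{|(\Xi_n)_{ij}|>x\}$, this is a sum of independent Bernoulli variables with individual probabilities of order $n^{-2}$, so Le Cam's inequality yields $N_n(x) \Rightarrow \mathrm{Poisson}(\tfrac{1}{2}c\, x^{-2(1+1/\mu)})$. Hence $\mathbb{P}(T_1\leq x) = \mathbb{P}(N_n(x)=0) \to \exp(-\tfrac{1}{2}c\, x^{-2(1+1/\mu)})$ after using Lemma \ref{lemma2.345}(1) to dispose of the diagonal entries, which uniformly in $i$ are of size $o((\log n)^{-5})$ with probability $1-o(1)$ and therefore produce no point in $[c,\infty)$ for any fixed $c>0$.

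For the point process statement, I invoke Kallenberg's theorem for simple point processes on a locally compact space: it is enough to verify that for every half-open interval $(a,b] \subseteq [c,\infty)$, both $\mathbb{E}[\Pi_n((a,b])]$ and $\mathbb{P}(\Pi_n((a,b])=0)$ converge to the values dictated by the candidate intensity $\nu(dx)=\frac{c(1+1/\mu)}{x^{2(1+1/\mu)+1}}dx$. The expectation follows by differencing the tail computation at $x=a$ and $x=b$ (and $\nu((a,b])$ is exactly this difference, since $\nu$ is $-\frac{d}{dx}[\tfrac{1}{2}c x^{-2(1+1/\mu)}]$), while the void probability follows from the same Le Cam argument applied to $\sum_{i<j}\mathbf{1}\{|(\Xi_n)_{ij}|\in(a,b]\}$. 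The whole argument is essentially routine; the only point requiring attention is uniformity of the regular variation approximation as $x\sqrt{p_n}\to\infty$ over $x$ in compact subsets of $[c,\infty)$, which is immediate from the hypothesized limit $\lim_{y\to\infty} y^{2(1+1/\mu)}\mathbb{P}(|a_{ij}|>y)=c$.
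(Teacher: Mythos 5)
Your proof is correct and follows essentially the same route as the paper: compute the per-entry tail $\mathbb{P}(|(\Xi_n)_{ij}|>x)\sim cx^{-2(1+1/\mu)}/n^2$ using $p_n=n^\mu$, then exploit independence to obtain the Poisson limit for the exceedance count (hence the max distribution) and the point process convergence. The paper simply writes the max CDF as the explicit product $\bigl(1-cx^{-2(1+1/\mu)}/n^2\bigr)^{(n^2+n)/2}$ and cites Leadbetter et al.\ for the point process, whereas you package the same computation via Le Cam's inequality and Kallenberg's criterion; the substance is identical.
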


\begin{proof}
Recall that for each $(i,j)$, $\mathbb{P}(|(\Xi_n)_{ij}|>x)=\frac{cx^{-2(1+\frac{1}{\mu})}}{n^2},$ so that in the regime $p_n=n^\mu$, taking a union bound for all $1\leq i\leq j\leq n$,
$$
\mathbb{P}(T_1\leq x)=\prod_{1\leq i\leq j\leq n} \mathbb{P}(|(\Xi_n)_{ij}|<x)=(1-\frac{cx^{-2(1+\frac{1}{\mu})}}{n^2})^\frac{n^2+n}{2}\to \exp(-\frac{cx^{-2(1+\frac{1}{\mu})}}{2}).
$$
To prove the second claim, one only need a straightforward generalization of the above computations, see \cite{leadbetter2012extremes}, Theorem 2.3.1. This claim is also exactly the same as in \cite{Soshnikov2004},Proposition 1.
\end{proof}

\subsection{Sampling procedure}\label{sampling1st} In this section we discuss a method to sample the random matrix $\Xi_n$ differently, so that $\Xi_n$ can be expressed as the sum of a matrix with small entries and having the same variance profile as a GOE, and another sparse matrix with large entries. This idea of resampling is inspired by \cite{aggarwal2019bulk}.

\begin{Sampling} In the setting of both Theorem \ref{theorem1.2} and Theorem \ref{theorem1.2add}, we take the following steps to resample the matrix $\Xi_n$: we abbreviate $Q_n:=\sqrt{p_n}(\log n)^{-5}$.

\begin{enumerate}
\item 
 Initially, sample the sparsity matrix $B_n$ such that $b_{ij}=b_{ji}$ and that $b_{ij}$ is distributed as $\operatorname{Ber}(\frac{p_n}{n})$ for each $1\leq i\leq j\leq n$. 
\item Independently for each $(i,j)\in[1,n]^2$, $i\leq j$, such that $g_{ij}=1$, we consider a Bernoulli random variable which is 0  with probability $\mathbb{P}(|a_{ij}|<Q_n)$, and is 1 otherwise. If the Bernoulli variable takes the value 0, we place a label $S$ at site $(i,j)$; while if the Bernoulli variable takes value 1, we place a label $L$ at site $(i,j)$. Here $S$ stands for small and $L$ stands for large.
    
\item Then for each $1\leq i\leq j\leq n$ such that $(i,j)$ is assigned with a label $L$, we independently sample $\widehat{a}_{ij}$ from the law $\mathbb{P}(a_{ij}\in dx\mid |a_{ij}|>Q_n)$ (that is, the law of $a_{ij}$ conditioned on the event that $|a_{ij}|>Q_n$) and then set $\widehat{a}_{ij}=\widehat{a}_{ji}$; and for $(i,j)$ assigned with label $S$ or for $(i,j)$ such that $b_{ij}=0$, we simply set $\widehat{a}_{ij}=0.$
\item From this, define for $(i,j)\in[1,n]^2:$ $$(\Xi_n^{L})_{ij}:=\frac{1}{\sqrt{p_n}}b_{ij}\widehat{a}_{ij}.$$ 
\item For each $1\leq i\leq j\leq n$ let $\overline{a}_{ij}$ be i.i.d. random variables (independent of the $\widehat{a}_{ij}$'s) having distribution $\mathbb{P}(a_{ij}\in dx\mid |a_{ij}|<Q_n)$ (that is, the law of $a_{ij}$ conditioned on the event that $|a_{ij}|<Q_n$), then set $\overline{a}_{ij}=\overline{a}_{ji}$ and define
$$(\Xi_n^{S})_{ij}:=\frac{1}{\sqrt{p_n}}b_{ij}\overline{a}_{ij}.$$

\item Finally,  set $$(\Xi_n^{N})_{ij}:=-\frac{1}{\sqrt{p_n}}b_{ij}\overline{a}_{ij}1_{\text{label of $(i,j)$ is $L$}}.$$ We observe from our construction that
\begin{equation}\label{equation2.52.5}
    \Xi_n\overset{\text{law}}{\underset{}=}\Xi_n^S+\Xi_n^L+\Xi_n^N,
\end{equation}
and that conditioned on the filtration $\mathcal{F}_{B,S,L}$ generated by sampling the matrix $B$ and by sampling the labels $S$ and $L$, the two random matrices $\Xi_n^L$ and $\Xi_n^S$ have independent entries, which is a crucial property we will use in the sequel.

To check \eqref{equation2.52.5}, we only need to notice that, given that $\widehat{a}_{ij}$ and $\overline{a}_{ij}$ are sampled independently, $$
a_{ij}\overset{\text{law}}{\underset{}=} \widehat{a}_{ij}1_{(i,j)\text{ has label L}}+\overline{a}_{ij}1_{(i,j)\text{ has label S}}.
$$

\end{enumerate}    
\end{Sampling}

We next show that $\Xi_n^N$ is negligible in the limit when we consider the top eigenvalues:

\begin{lemma} Under the assumption of Theorem \ref{theorem1.2}, for each sampled matrix $B$ and sampled label configurations $S,L$ we use $\Omega_{B,S,L}$ to denote the event that the sampling procedure yields $B,S,L$ as the output. Then there exists a Borel subset $\Omega_n\subset \Omega$, $\Omega_n\in\mathcal{F}_{B,S,L}$ with $\mathbb{P}(\Omega_n)\to 1$ such that for any $\Omega_{B,S,L}\in\Omega_n$,  we have the following almost sure convergence on $\Omega_n$
\begin{equation}
   | \lambda_1(\Xi_n^N)|\leq (\log n)^{-5}{\underset{n\to\infty}\longrightarrow}0,\quad\text{ on } \Omega_n,
\end{equation}
and that on this event $\Omega_n$, for any $k\in\mathbb{N}_+,$
\begin{equation}\label{secondclaim}
 |\lambda_k(\Xi_n^N+\Xi_n^S+\Xi_n^L)-  \lambda_k(\Xi_n^S+\Xi_n^L)|\leq (\log n)^{-5}{\underset{n\to\infty}\longrightarrow} 0,
\end{equation} where $\lambda_k(\cdot)$ denotes the $k$-th largest eigenvalue of the given matrix.
\end{lemma}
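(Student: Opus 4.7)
\medskip
\noindent\textbf{Proof proposal.} The plan is to show that on a high-probability event depending only on $B$ and the labels $S,L$, the matrix $\Xi_n^N$ is block-diagonal with at most $2\times 2$ symmetric blocks whose entries are all bounded by $(\log n)^{-5}$, so that $\|\Xi_n^N\|_{\mathrm{op}}\leq (\log n)^{-5}$; Weyl's inequality then delivers \eqref{secondclaim}.

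First I would define $\Omega_n\in\mathcal{F}_{B,S,L}$ to be the event on which (i) no diagonal position $(i,i)$ carries the label $L$, and (ii) for every $i\in[1,n]$ there do not exist two distinct indices $j\neq k$ such that both $(i,j)$ and $(i,k)$ carry the label $L$. Since the event ``$(i,j)$ has label $L$'' is exactly $\{b_{ij}=1,\,|a_{ij}|>Q_n\}$, which coincides with $\{|(\Xi_n)_{ij}|>(\log n)^{-5}\}$, the estimates \eqref{equationcase1} and \eqref{equationcase2} of Lemma \ref{lemma2.345} (applied with threshold $(\log n)^{-5}$) give $\mathbb{P}(\Omega_n)=1-o(1)$. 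Measurability of $\Omega_n$ with respect to $\mathcal{F}_{B,S,L}$ is automatic because the label configuration is one of the generators of this $\sigma$-algebra.

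Next I would bound the norm of $\Xi_n^N$ pointwise on $\Omega_n$. By construction $\overline{a}_{ij}$ is sampled from the law of $a_{ij}$ conditioned on $|a_{ij}|<Q_n$, so $|\overline{a}_{ij}|\leq Q_n$; consequently $|(\Xi_n^N)_{ij}|\leq Q_n/\sqrt{p_n}=(\log n)^{-5}$ for every $(i,j)$. On $\Omega_n$, the set of off-diagonal label-$L$ positions $\{(i,j):i<j,\ \text{label}(i,j)=L\}$ forms a matching on $[1,n]$ by (ii), and by (i) there is no diagonal contribution. Thus, after a simultaneous permutation of rows and columns, $\Xi_n^N$ is block diagonal with summands that are either zero or of the form $\left(\begin{smallmatrix}0&t\\t&0\end{smallmatrix}\right)$ with $|t|\leq (\log n)^{-5}$; each such block has operator norm $|t|$, and therefore
\[
\|\Xi_n^N\|_{\mathrm{op}}\ \leq\ (\log n)^{-5}\quad \text{on }\Omega_n,
\]
which is the first assertion.

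Finally, the second assertion follows at once from Weyl's inequality for symmetric matrices: on $\Omega_n$,
\[
\bigl|\lambda_k(\Xi_n^S+\Xi_n^L+\Xi_n^N)-\lambda_k(\Xi_n^S+\Xi_n^L)\bigr|\ \leq\ \|\Xi_n^N\|_{\mathrm{op}}\ \leq\ (\log n)^{-5}.
\]
There is no serious obstacle in this argument; the only conceptual point to verify carefully is that the bad configurations ruled out by Lemma \ref{lemma2.345} (a diagonal large entry, or two large entries in a single row) are measurable with respect to $\mathcal{F}_{B,S,L}$, which holds because these events are determined by $B$ and by the indicator whether $|a_{ij}|$ exceeds $Q_n$, i.e.\ by the label pattern.
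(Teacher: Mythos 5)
Your proof is correct and follows essentially the same route as the paper: identify the high-probability event (determined by $B$ and the label pattern) on which the label-$L$ positions avoid the diagonal and form a matching, note that $|\overline{a}_{ij}|\leq Q_n$ forces every entry of $\Xi_n^N$ to be at most $(\log n)^{-5}$ in absolute value, deduce the block-diagonal structure and hence the operator-norm bound, and finish with Weyl's inequality. Your version is slightly more explicit about the $2\times 2$ block structure and the $\mathcal{F}_{B,S,L}$-measurability, but the argument is the same.
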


\begin{proof}
By Lemma \ref{lemma2.345}, with probability tending to one, on each row and column of $\Xi_n^N$ there is at most one non-zero element. So that upon rearranging the rows and columns, we may assume $\Xi_n^N$ is in a diagonal block form and all the eigenvalues of $\Xi_n^N$ correspond to the non-zero elements of $\Xi_n^N$, which are deterministically less than $(\log n)^{-5}$ (in absolute value) by definition of $\overline{a}_{ij}$. Therefore, taking $\Omega_n$ the event that on each row and column of $\Xi_n^N$ there is at most one non-zero element, completes the proof of the lemma.

The second claim \eqref{secondclaim} follows from Weyl interlacing formula for symmetric matrices.
\end{proof}

We finally show that only the largest elements in $\Xi_n^L$ need to be taken into account when analyzing edge eigenvalues.

\begin{lemma} 
 For any $c>0$, define the following two matrices \begin{equation}
    (\Xi^{L,\geq c}_n)_{ij}= (\Xi^{L}_n)_{ij}1_{|(\Xi^{L}_n)_{ij}|\geq c},\quad 
    (\Xi^{L,\leq c}_n)_{ij}= (\Xi^{L}_n)_{ij}1_{|(\Xi^{L}_n)_{ij}|\leq c}.
\end{equation}
Then there exists $\Omega_n\in\mathcal{F}_{B,S,L},$ $\mathbb{P}(\Omega_n)\to 1$, such that for any $(B,S,L)\in \Omega_n$ we have
\begin{equation}
    |\lambda_1(\Xi^{L,\leq c}_n)|\leq c,
\end{equation}
and that for any $(B,S,L)\in \Omega_n$, for any $k\in\mathbb{N}_+,$
\begin{equation}\label{modifiedsecondclaim}
 |\lambda_k(\Xi_n^N+\Xi_n^S+\Xi_n^L)-  \lambda_k(\Xi_n^S+\Xi_n^{L,\geq c})|\leq (\log n)^{-5}+c.
\end{equation}
\end{lemma}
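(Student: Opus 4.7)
The plan is to exploit the structural sparsity of $\Xi_n^L$ on a high-probability $\mathcal{F}_{B,S,L}$-event, and then apply Weyl's perturbation inequality, in close parallel to the previous lemma.

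First, I would identify the positions $(i,j)$ carrying label $L$ with the locations of ``large'' entries of $\Xi_n$. By the sampling procedure, $(i,j)$ receives label $L$ precisely when $b_{ij}=1$ and $|a_{ij}|>Q_n=\sqrt{p_n}(\log n)^{-5}$; equivalently, when $b_{ij}=1$ and $|(\Xi_n)_{ij}|>(\log n)^{-5}$. Consequently, on label-$L$ sites one has $|(\Xi_n^L)_{ij}|=|\widehat{a}_{ij}|/\sqrt{p_n}>(\log n)^{-5}$. Therefore parts (1) and (2) of Lemma~\ref{lemma2.345} translate into $\mathcal{F}_{B,S,L}$-measurable statements of probability $1-o(1)$: (i) no diagonal position carries label $L$, and (ii) every row contains at most one off-diagonal position with label $L$. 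Let $\Omega_n\in\mathcal{F}_{B,S,L}$ be the intersection of these two events together with the $\Omega_n$ from the previous lemma (on which $\|\Xi_n^N\|_{op}\leq(\log n)^{-5}$). Then $\mathbb{P}(\Omega_n)\to 1$.

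On $\Omega_n$, the matrix $\Xi_n^L$ has zero diagonal and, by symmetry, at most one non-zero entry per row and per column; this pattern is inherited by the truncation $\Xi_n^{L,\leq c}$. After a single simultaneous permutation of rows and columns, $\Xi_n^{L,\leq c}$ becomes block diagonal, with each block being either a $1\times 1$ zero or a $2\times 2$ matrix $\bigl(\begin{smallmatrix} 0 & x \\ x & 0 \end{smallmatrix}\bigr)$ with $|x|\leq c$. Each such block has spectrum $\{\pm x\}$, so $\|\Xi_n^{L,\leq c}\|_{op}\leq c$, which is the first claim.

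For the second claim, I decompose
\[
\Xi_n^N+\Xi_n^S+\Xi_n^L \;=\; \bigl(\Xi_n^S+\Xi_n^{L,\geq c}\bigr) \;+\; \Xi_n^{L,\leq c} \;+\; \Xi_n^N,
\]
and apply Weyl's eigenvalue perturbation inequality for real symmetric matrices twice in succession:
\[
\bigl|\lambda_k(\Xi_n^N+\Xi_n^S+\Xi_n^L)-\lambda_k(\Xi_n^S+\Xi_n^{L,\geq c})\bigr|\;\leq\;\|\Xi_n^{L,\leq c}\|_{op}+\|\Xi_n^N\|_{op} \;\leq\; c+(\log n)^{-5},
\]
as required by \eqref{modifiedsecondclaim}. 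There is no substantive obstacle; the whole content of the argument is that the label-$L$ sites are already sparse row-by-row via Lemma~\ref{lemma2.345}, together with the elementary observation that a symmetric matrix with zero diagonal and at most one non-zero entry per row has operator norm bounded by its largest entry in absolute value.
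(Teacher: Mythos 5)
Your proposal is correct and takes essentially the same route as the paper: identify that, on an $\mathcal{F}_{B,S,L}$-measurable event of probability $1-o(1)$ (obtained from Lemma~\ref{lemma2.345} parts (1)--(2), whose union-bound computation applies verbatim to the label-$L$ sites since they occur with exactly the same probability), $\Xi_n^L$ has zero diagonal and at most one nonzero off-diagonal entry per row/column, giving $\|\Xi_n^{L,\leq c}\|\leq c$ by the block-diagonal observation, and then conclude by Weyl's inequality. If anything, your version is a touch more careful than the paper's one-line remark in explicitly phrasing the event in terms of the label-$L$ sites (rather than the nonzero sites of $\Xi_n^N$, which could in principle be a strict subset), and in spelling out the $2\times 2$ block structure.
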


The proof of this lemma is exactly the same as before: simply note that for $\Xi_n^L$, on the event $\Omega_n$ defined as in the previous lemma, there is at most one nonzero element in each row and column of $\Xi_n^L$, so that $\|\Xi_n^{L,\leq c}\|\leq c$.

To conclude, in order to determine the edge eigenvalue of $\Xi_n$, it suffices to determine the edge eigenvalue of $\Xi_n^S+\Xi_n^{L,\geq c}$, and show it is consistent as we send $c\to 0$.

\subsection{Universality principle and proof for eigenvalue statistics}
From our sampling procedure, after sampling $B$,$S$ and $L$, the two components $\Xi_n^{L,\geq c}$ and $\Xi_n^{S}$ have independent entries conditioned on $\mathcal{F}_{B,S,L}$. Thus for our goal, it suffices to sample $\Xi_n^{L,\geq c}$ first, and then the edge eigenvalues of $\Xi_n^{S}+\Xi_n^{L,\geq c}$ should follow from finite rank perturbation results applied to $\Xi_n^{S}$. However, although the covariance structure of $\Xi_n^{S}$ is close to a Gaussian random matrix, its individual elements do not have sufficiently many moments for classical finite rank perturbation results to be applied.

The recent breakthrough work of Brailovskaya and Van Handel \cite{brailovskaya2022universality} provides a user-friendly solution to this problem. In the setting of \cite{brailovskaya2022universality}, they consider a random matrix
\begin{equation}\label{formofX}
    X:=Z_0+\sum_{i=1}^n Z_i, 
\end{equation} where for some $d\in\mathbb{N}_+,$ $Z_0\in \operatorname{M}_d(\mathbb{C})_{sa}$ is a deterministic matrix and $Z_1,\cdots,Z_n$ are i.i.d. self-adjoint matrices in $\operatorname{M}_d(\mathbb{C})_{sa}$ that satisfy $\mathbb{E}[Z_i]=0.$ For this matrix they consider the following four parameters (we always use the symbol $\|\cdot\|$ to denote the operator norm of a matrix)
\begin{equation}
    \sigma(x):=\|\mathbb{E}[X-\mathbb{E}[X]]^2\|^\frac{1}{2},
\end{equation}

\begin{equation}
    \sigma_*(X):=\sup_{\|v\|=\|w\|=1} \mathbb{E}\left[|\langle v,(X-\mathbb{E}[X])w\rangle|^2\right]^\frac{1}{2},
\end{equation}

\begin{equation}
    v(x):=\|\operatorname{Cov}(X)\|^\frac{1}{2},
\end{equation}

\begin{equation}
    R(x):=\|\max_{1\leq i\leq n}\|Z_i\|\|^\frac{1}{2}.
\end{equation}

Then we have the following result from \cite{brailovskaya2022universality}:
\begin{theorem}(\cite{brailovskaya2022universality}, Theorem 3.31)\label{theorem2.7} Let $G_d$ be a $d\times d$ self-adjoint random matrix whose entries $({G_{d}}_{ij})_{i\geq j}$ have independent real Gaussian distributions with mean $0$ and variance $\frac{1+1_{i=j}}{d}$. Let $A_d$ be deterministic $d\times d$ matrix with eigenvalues $\theta_1\geq\cdots\geq \theta_r\geq0\geq \theta_{r+1}\geq \cdots\geq\theta_s$ that are independent of $d$. Assume that $H_d$ is a random matrix of the form \eqref{formofX} whose entries have the same mean and variance of $G_d$, and that $(\log d)^2R(H_d)\to 0$ as $d\to\infty$. Then the following conclusions hold: (recall the function $f$ defined in \eqref{equationoff})
\begin{enumerate}
    \item For $1\leq i\leq r$, $\lambda_i(A_d+H_d){\underset{n\to\infty}\longrightarrow} f(\theta_i),$ and $\lambda_i(A_d+H_d){\underset{n\to\infty}\longrightarrow} 2$ for $i\geq r+1,$
    \item For each $1\leq i,j\leq r$ such that $\theta_j\neq \theta_i$ and $\theta_i>1$, 
\begin{equation}\label{theorem2.7case2}
    \|P_i(A_d)v_i(A_d+H_d)\|^2{\underset{n\to\infty}\longrightarrow} 1-\frac{1}{\theta_i^2},\quad  \|P_j(A_d)v_i(A_d+H_d)\|^2{\underset{n\to\infty}\longrightarrow} 0,
\end{equation}
    where for a matrix $M$, $P_i(M)$ denotes the projection onto the eigenspace of $M$ corresponding to the eigenvalue $\lambda_i(M)$, and $v_i(M)$ is the unit eigenvector of $M$ with eigenvalue $\lambda_i(M)$.
\end{enumerate}
    
\end{theorem}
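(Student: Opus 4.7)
The plan is to reduce the theorem to the purely Gaussian case $H_d = G_d$ via the universality principle of Brailovskaya--Van Handel, and then carry out the classical BBP analysis for the deformed GOE. Write $A_d = \sum_{i=1}^{s} \theta_i u_i u_i^*$ in its eigenbasis, and use the Schur complement / secular equation: an eigenvalue $\lambda$ of $A_d + G_d$ outside the spectrum of $G_d$ satisfies the determinantal identity $\det\bigl(I + \operatorname{diag}(\theta_i)\,[\langle u_i, (G_d - \lambda)^{-1} u_j\rangle]_{ij}\bigr) = 0$. By the isotropic semicircle law, $\langle u_i, (G_d - \lambda)^{-1} u_j \rangle \to m_{sc}(\lambda)\delta_{ij}$ uniformly for $\lambda$ bounded away from $[-2,2]$, reducing the secular equation to $\theta_i m_{sc}(\lambda) = -1$, whose unique solution in $(2,\infty)$ is $\lambda = f(\theta_i) = \theta_i + \theta_i^{-1}$, provided $\theta_i > 1$; this gives part (1). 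For part (2), the eigenvector $v_i(A_d + G_d)$ is recovered from the residue of $(A_d + G_d - z)^{-1}$ at $z = f(\theta_i)$, and a direct residue calculation using $m_{sc}'(f(\theta_i)) = \theta_i^2/(\theta_i^2 - 1)$ yields the overlap $1 - \theta_i^{-2}$ with $P_i(A_d)$, and $0$ with $P_j(A_d)$ for distinct eigenspaces.

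The transfer from $G_d$ to $H_d$ proceeds via the main matrix concentration result of \cite{brailovskaya2022universality}, which compares smooth spectral statistics of $H_d$ and $G_d$ up to an error of order $R(H_d)$ times a polylogarithmic factor in $d$. The hypothesis $(\log d)^2 R(H_d) \to 0$ is precisely what guarantees that the resolvent comparison $\langle u, (A_d + H_d - z)^{-1} v\rangle - \langle u, (A_d + G_d - z)^{-1} v\rangle \to 0$ holds uniformly for $z$ in a fixed compact neighborhood of each outlier location $f(\theta_i)$ staying at positive distance from $[-2,2]$. Applying this to Cauchy contour integrals enclosing the outliers recovers both the outlier eigenvalues and, via the spectral projector $-\frac{1}{2\pi i}\oint (A_d + H_d - z)^{-1} dz$, the eigenvector overlaps in the statement.

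The main obstacle I anticipate is establishing a sufficiently strong isotropic version of the Brailovskaya--Van Handel universality, since their framework is naturally phrased for trace-type observables. The cleanest route is to embed the isotropic quantity $\langle u, (A_d + H_d - z)^{-1} v\rangle$ as a normalized trace against a rank-one observable, either by direct inspection that the matrix concentration argument passes through in this regime, or by augmenting to a $(d+1)\times(d+1)$ block matrix carrying the test vectors as an extra block and applying the trace version to the enlarged model. A favorable feature is that the outliers $f(\theta_i) > 2$ for $\theta_i > 1$ lie at positive distance from the spectral edge, so the resolvent remains bounded and analytic on the relevant contour and no delicate edge-smoothing is required; this is what makes the polylogarithmic loss in $R(H_d)$ affordable under the stated hypothesis.
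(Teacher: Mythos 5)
This statement is not proved in the paper at all: Theorem \ref{theorem2.7} is a verbatim citation of Theorem 3.31 of Brailovskaya and Van Handel, and the only proof-level content the paper adds is a one-sentence remark that BVH stated the theorem for $A_d$ with finitely many nonnegative eigenvalues, but the argument extends trivially to $A_d$ having finitely many signed eigenvalues. Your proposal instead attempts to reprove the cited theorem from scratch, which is a more ambitious and genuinely different route than what the paper does. The sketch you give (secular equation plus isotropic semicircle law for the Gaussian case, then transfer to $H_d$ via BVH's resolvent universality) is indeed the skeleton of BVH's own argument for Theorem 3.31, so the approach is sound in principle. Your calculation of the outlier location $\lambda = \theta + \theta^{-1}$ and the overlap $1 - \theta^{-2}$ is correct, modulo a misstatement of an intermediate quantity: one has $m_{sc}'(f(\theta)) = 1/(\theta^2-1)$, not $\theta^2/(\theta^2-1)$ (the latter is the derivative of $-1/m_{sc}$, which is likely what you meant), but the overlap $\frac{1}{\theta^2 m_{sc}'(f(\theta))} = 1 - \theta^{-2}$ that you state is right.

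The substantive gap is the one you flag yourself in the last paragraph: the ``isotropic'' resolvent comparison between $A_d + H_d$ and $A_d + G_d$, uniformly on a contour around each outlier, is not a direct corollary of the trace-level universality statements of \cite{brailovskaya2022universality}; establishing it (e.g.\ by the rank-one trace trick or block augmentation you mention, or by running the interpolation argument at the resolvent level) is precisely the technical content of BVH's proof of their Theorem 3.31. In a proof attempt you would either need to carry this out, or — much more simply and in line with what the paper does — just cite BVH Theorem 3.31 directly, supplemented with the observation that the assumption of nonnegative $\theta_i$ there is not essential (replace $A_d$ by $-A_d$ and use symmetry, or observe that the secular equation analysis is identical for negative spikes). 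As written, the proposal leaves the key lemma at the level of an anticipated obstacle rather than a resolved step.
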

Note that in the statement of \cite{brailovskaya2022universality}, Theorem 3.31 only matrix $A_n$ with finitely many non-negative eigenvalues were considered, but the claim clearly extends to the current setting with $A_n$ having both finitely many positive and negative eigenvalues.

Now we complete the proof of Theorem \ref{theorem1.2}.

\begin{proof}[\proofname\ of Theorem \ref{theorem1.2}, claim (1) and (2)] We first sample the sparse matrix $B$, the labels $S$ and $L$, and it suffices to further sample the matrix sum $\Xi_n^S+\Xi_n^{L,\geq c}$. Thanks to independence, we first sample $\Xi_n^{L,\geq c}$. By Lemma \ref{lemma2.34567}, the non-zero elements in $\{|(\Xi_n^{L,\geq c})|_{i,j},1\leq i\leq j\leq n\}$ converge to in distribution to the Poisson point process on $[c,\infty)$ with intensity measure \eqref{intensitymeasures}, and almost surely there are only finitely many such points. Also by Lemma \ref{lemma2.345}, with probability tending to one, all diagonal elements of ${\Xi_n^{L,\geq c}}$ vanish, and no row or column of ${\Xi_n^{L,\geq c}}$ has at least two non-zero elements. Denote by $\zeta_1\geq\cdots\geq \zeta_r\geq c$ the points in this Poisson point process. Also denote by $\mathcal{F}_{B,S,L,\geq c}$ the filtration generated by sampling $B$, labels $S,L$, and sampling $\Xi_n^{L,\geq c}$.

Now, conditioned on the filtration $\mathcal{F}_{B,S,L,\geq c}$, the real symmetric matrix $\Xi_n^S$ has i.i.d entries on and above diagonal, with variance $\frac{\operatorname{Var}\left(a_{ij}1_{|a_{ij}|\leq Q_n}\right)}{n}$. Further, from our sampling procedure, we have almost surely $(\log n)^2 R(\Xi_n^S)\to 0$ as $n\to\infty$.

We now wish to apply Theorem \ref{theorem2.7}, (1) with the choice $H_d=\Xi_n^S$ and $A_d=\Xi_n^{L,\geq c}$. We have to make the following modifications: since $\lim_{n\to\infty}\operatorname{Var}\left(a_{ij}1_{|a_{ij}|\leq Q_n}\right)\to 1$, we shall consider $a_n\Xi_n^S$ where $a_n>1,a_n\to 1$ so that each off-diagonal element of $a_n\Xi_n^S$ has variance $\frac{1}{n}$. Also, even though the diagonal element of $a_n\Xi_n^S$ does not have variance $\frac{2}{n}$, we can add an independent diagonal matrix $L^1_n$ so that variance profile of $a_n\Xi_n^S+L^1_n$ satisfy the assumptions of Theorem \ref{theorem2.7}. Finally, to remedy the fact that the eigenvalues of $\Xi_n^{L,\geq c}$ are not fixed but only converge to $(\zeta_1,\cdots,\zeta_r)$ in the limit, we add another matrix $L^2_n$ in such a way that $\|L^2_n\|\to 0$, and moreover that eigenvalues of $\Xi_n^{L,\geq c}+L^2_n$ are independent of $n$ for $n$ sufficiently large. This matrix $L^2_n$ is necessarily random, but is measurable with respect to $\mathcal{F}_{B,S,L,\geq c}$ and independent of $\Xi_n^S$ .Now all conditions of Theorem \ref{theorem2.7} have been verified and claim (1) of Theorem \ref{theorem2.7} holds for $a_n\Xi_n^S+L_n^1+L_n^2+\Xi_n^{L,\geq c}$ in place of $A_d+H_d$. 
Then Theorem \ref{theorem2.7} implies the following convergence in distribution
$$
\lambda_k(a_n\Xi_n^S+L_n^1+L_n^2+\Xi_n^{L,\geq c}){\underset{n\to\infty}\longrightarrow} f(\zeta_k) .
$$
To turn back to $\Xi_n^S+\Xi_n^{L,\geq c}$, note that the operator norms of $L^1_n,L^2_n$ and $(a_n-1)\Xi_n^S$ all converge to $0$ in probability as $n\to\infty$, so that we have the convergence in probability
$$
\|(a_n-1)\Xi_n^S+L_n^1+L_n^2\|{\underset{n\to\infty}\longrightarrow} 0,
$$
and use Weyl interlacing formula for eigenvalue of symmetric matrices to deduce $$
\left|\lambda_k(a_n\Xi_n^S+L_n^1+L_n^2+\Xi_n^{L,\geq c})-\lambda_k(\Xi_n) \right|\leq \|\Xi_n^{L,\leq c}\|+\|(a_n-1)\Xi_n^S+L_n^1+L_n^2\|{\underset{n\to\infty,c\to 0}\longrightarrow} 0.
$$ We actually first set $n\to\infty$ for fixed $c$, and then set $c\to 0$. This concludes the proof of Theorem \ref{theorem1.2}, cases (1) and (2).
\end{proof}

\subsection{Eigenvector statistics}
Since $a_{ij}$ has a symmetric law, then we see that the non-zero elements of $(\Xi_n^{L,\geq c})_{ij}$ converge to the Poisson point process on $(-\infty,\infty)\setminus[-c,c]$ with density measure $\frac{c(1+\frac{1}{\mu})}{2x^{2(1+\frac{1}{\mu})+1}}dx.$ Recall in the proof in the previous paragraph that we added $L^n_2$ so that $\Xi_n^{L,\geq c}+L_n^2$ has eigenvalue independent of $n$, and that $\|L_n^2\|\to 0$, so one sees that as long as $n$ is sufficiently large, the eigenspaces corresponding to non-zero eigenvalues of $L_n^2+\Xi_n^{L,\geq c}$ and $\Xi_n^{L,\geq c}$ coincide.
More precisely, suppose some nonzero element $(\Xi_n^{L,\geq c})_{i_n,j_n}$ with index $(i_n,j_n)$ in $\Xi_n^{L,\geq c}$ converge to $\bar{\xi}$, then the $(i_n,j_n)$ elements of $\Xi_n^{L,\geq c}+L_n^2$ also converge to $\bar{\xi}$; and when $\bar{\xi}>0$, then $(0,\cdots,\frac{1}{\sqrt{2}},\cdots,\frac{1}{\sqrt{2}},\cdots,0)$ (being nonzero on the $i_n$ and $j_n$ entries, and zero elsewhere) is an eigenvector for both matrices, corresponding to eigenvalues converging to $\bar{\xi}$. When $\bar{\xi}<0$, the vector $(0,\cdots,\frac{1}{\sqrt{2}},\cdots,-\frac{1}{\sqrt{2}},\cdots,0)$ (non-zero only on $i_n$ and $j_n$ entries) is an eigenvector of both matrices, associated to eigenvalues converging to $-\bar{\xi}$. This implies, at least for $n$ sufficiently large, the two projections $P_i(L_n^2+\Xi_n^{L,\geq c})$ and $P_i(\Xi_n^{L,\geq c})$  are identical for each fixed $i$, where we recall that $P_i(\cdot)$ is the projection onto the linear space spanned by the eigenvector of the $i$-th largest eigenvalue of the given matrix.

To complete the proof of Theorem \ref{theorem1.2}, part (3), we need the following simple lemma.

\begin{lemma}\label{lemmaline422}
In the setting of Theorem \ref{theorem2.7}, assume that a sequence of $n\times n$ symmetric random matrices $H_n$ are given such that \eqref{theorem2.7case2} holds almost surely. Then given any $n\times n$ symmetric matrix $L_n$ with $\|L_n\|\overset{\text{a.s.}}{\underset{n\to\infty}\longrightarrow} 0$, one must have \eqref{theorem2.7case2} holds verbatim with  $H_n+L_n$ in place of $H_n$.
\end{lemma}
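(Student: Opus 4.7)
The plan is to combine Weyl's inequality with the Davis--Kahan $\sin\Theta$ theorem to transfer eigenvector stability from $A_d+H_n$ to $A_d+H_n+L_n$. First, Weyl's inequality immediately gives
\begin{equation*}
\max_k |\lambda_k(A_d + H_n + L_n) - \lambda_k(A_d + H_n)| \le \|L_n\| \overset{\text{a.s.}}{\underset{n\to\infty}\longrightarrow} 0,
\end{equation*}
so the eigenvalue configurations of the two matrices are asymptotically identical almost surely.

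Next, in the setting of Theorem \ref{theorem2.7}, part (1) of that theorem yields $\lambda_i(A_d+H_n)\to f(\theta_i)$ almost surely. For $\theta_i>1$ we have $f(\theta_i)>2$, and this value is separated from both the bulk edge $2$ and from each other limit $f(\theta_j)$ with $\theta_j\neq\theta_i$. Hence there exist a deterministic $\delta>0$ and an a.s.\ finite $N$ such that for every $n\ge N$ the spectral gap between $\lambda_i(A_d+H_n)$ and all remaining eigenvalues of $A_d+H_n$ is at least $\delta$ almost surely. Applying the Davis--Kahan theorem to the pair $A_d+H_n$ and $A_d+H_n+L_n$, with the spectral window of width $\delta/2$ centered at $\lambda_i(A_d+H_n)$ isolating a single simple eigenvalue of each matrix, yields
\begin{equation*}
\|v_i(A_d + H_n + L_n) - v_i(A_d + H_n)\| \le \frac{2\|L_n\|}{\delta} \overset{\text{a.s.}}{\underset{n\to\infty}\longrightarrow} 0,
\end{equation*}
after choosing the sign of $v_i(A_d+H_n+L_n)$ to minimize the distance.

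Finally, since $P_j(A_d)$ is a deterministic orthogonal projection, the map $v\mapsto \|P_j(A_d)v\|^2$ is Lipschitz on the unit sphere, so
\begin{equation*}
\bigl|\|P_j(A_d)v_i(A_d+H_n+L_n)\|^2 - \|P_j(A_d)v_i(A_d+H_n)\|^2\bigr| \overset{\text{a.s.}}{\underset{n\to\infty}\longrightarrow} 0.
\end{equation*}
The assumed convergence \eqref{theorem2.7case2} for $H_n$ therefore passes verbatim to $H_n+L_n$. The only delicate step is the a.s.\ lower bound on the spectral gap, but this is immediate from the a.s.\ eigenvalue convergence in part (1) of Theorem \ref{theorem2.7}; no quantitative rate on $\|L_n\|$ is required beyond $\|L_n\|\to 0$ a.s.
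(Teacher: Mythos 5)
Your proof is correct, and it reaches the same conclusion via a genuinely different route from the paper. The paper proceeds by functional calculus: it shows that $\langle v^j,\varphi(H_n+A_d+L_n)v^j\rangle - \langle v^j,\varphi(H_n+A_d)v^j\rangle\to 0$ a.s.\ for any smooth compactly supported $\varphi$ (using that $\varphi$ is Lipschitz and $\|L_n\|\to0$), then chooses a bump function $\varphi^i_\epsilon$ localized around $f(\theta_i)$; the eigenvalue convergence from part~(1) of Theorem~\ref{theorem2.7} ensures that for small $\epsilon$ this bump isolates exactly one eigenvalue of each perturbed matrix, so the quadratic form computes the squared overlap with the corresponding eigenvector, and the conclusion follows by letting $\epsilon\downarrow 0$ after $n\to\infty$. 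You instead invoke Weyl's inequality to match eigenvalue configurations, use the a.s.\ eigenvalue convergence to establish an eventual deterministic spectral gap, and then cite Davis--Kahan to transfer the eigenvector, finishing with the Lipschitz continuity of $v\mapsto\|P_j(A_d)v\|^2$. In substance the two arguments prove the same perturbation stability; the paper's bump-function argument is essentially a self-contained derivation of the Davis--Kahan-type estimate, while yours imports Davis--Kahan as a black box, which is somewhat shorter and more transparent. One small point you handle correctly but implicitly: like the paper, you restrict to the case of mutually distinct $\theta_i$ (your phrase ``isolating a single simple eigenvalue''), and the paper flags that degenerate $\theta_i$ would require an additional perturbation argument à la Corollary~9.27 of \cite{brailovskaya2022universality}. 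It would strengthen your write-up to state that restriction explicitly, since Davis--Kahan as you use it requires a simple isolated eigenvalue.
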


\begin{proof}   Recall that $v_n^j$ is the unit eigenvector of $A_n$ corresponding to $\theta_j$.
For any smooth, compactly supported function $\varphi:\mathbb{R}\to [0,1]$, we have almost surely 
\begin{equation}\label{agagagagaga}
\langle v_n^j, \varphi(H_n+A_n+L_n)v_n^j\rangle-\langle v_n^j, \varphi(H_n+A_n)v_n^j\rangle {\underset{n\to\infty}\longrightarrow} 0.
\end{equation}
 Now we take $\varphi^i_\epsilon$ a non-negative smooth function supported on $[\theta_i+\frac{1}{\theta_i}-\epsilon,\theta_i+\frac{1}{\theta_i}+\epsilon]$, identical to one on $[\theta_i+\frac{1}{\theta_i}-\frac{\epsilon}{2},\theta_i+\frac{1}{\theta_i}+\frac{\epsilon}{2}]$. By definition, we have
 $$ 1_{[\theta_j+\frac{1}{\theta_j}-\frac{\epsilon}{2},\theta_j+\frac{1}{\theta_j}+\frac{\epsilon}{2}]}(H_n+A_n)\leq 
\varphi_\epsilon^j(H_n+A_n) \leq 1_{[\theta_j+\frac{1}{\theta_j}-\epsilon,\theta_j+\frac{1}{\theta_j}+\epsilon]}(H_n+A_n)
 $$ and 
 $$ 1_{[\theta_j+\frac{1}{\theta_j}-\frac{\epsilon}{2},\theta_j+\frac{1}{\theta_j}+\frac{\epsilon}{2}]}(H_n+A_n+L_n)\leq 
\varphi_\epsilon^j(H_n+A_n+L_n) \leq 1_{[\theta_j+\frac{1}{\theta_j}-\epsilon,\theta_j+\frac{1}{\theta_j}+\epsilon]}(H_n+A_n+L_n).
 $$
   By part (2) of Theorem \ref{theorem2.7}, $H_n+A_n$ has an eigenvalue converging to $\frac{1}{\theta_j}+\theta_j$, and so does $H_n+A_n+L_n$. Now we assume that $\theta_1,\cdots,\theta_s$ are mutually distinct (the only case we will use in this paper). Then given $\epsilon>0$ is sufficiently small, $\langle v_n^j,1_{[\theta_j+\frac{1}{\theta_j}-\epsilon,\theta_j+\frac{1}{\theta_j}+\epsilon]}(H_n+A_n)v_n^j\rangle$ is equal to the inner product of $v_n^j$ with the eigenvector of $H_n+A_n$ associated to the eigenvalue converging to $\theta_j+\frac{1}{\theta_j}$ as $n$ tends to infinity, and so does $H_n+A_n+L_n$. Then the claim follows from \eqref{agagagagaga}, that is, we first take $\epsilon$ fixed but sufficiently small, then set $n$ to infinity and finish the proof.

   In the case where some of $\theta_1,\cdots,\theta_s$ are equal, one may use a perturbation argument as in the proof of \cite{brailovskaya2022universality}, Corollary 9.27. We do not present the details as not needed here.
\end{proof}
Now we can complete the proof of Theorem \ref{theorem1.2}.

\begin{proof}[\proofname\ of Theorem \ref{theorem1.2}, claim (3)] The claim is a direct consequence of Theorem \ref{theorem2.7}, combined with the sampling procedure, the point process convergence in Lemma \ref{lemma2.34567}, and finally using Lemma \ref{lemmaline422} to remove perturbations of vanishing operator norm. 

To check the resulting expression in \eqref{whatistheevents}, note that for the spiked eigenvalue to converge to $\lambda_k$, the spike needs to converge to $\frac{\lambda_k+\sqrt{\lambda_k^2-4}}{2}$. By previous discussions the spike is associated (with probability 1-o(1)) to an eigenvector of the form $\frac{1}{\sqrt{2}}(\delta_i+D\delta_j)$ for some random $i\neq j$ and $D=\pm 1$. Then \eqref{whatistheevents} follows from applying Theorem \ref{theorem2.7}, part (2) and Lemma \ref{lemmaline422}.
    
\end{proof}

\subsection{The regime of super-polynomial decay}
We begin with a computational lemma:

\begin{lemma}\label{exponentiallemma2.345} In the setting of Theorem \ref{theorem1.2add}, the following estimates are satisfied:
    \begin{enumerate}
        \item (Diagonal elements) We have 
        \begin{equation}\label{}
            \mathbb{P}\left(\text{ there exists $i\in[1,n]$ such that } |(\Xi_n)_{i,i}|>\frac{1}{(\log n)^5}\right)=o(1).
        \end{equation}
        \item (Off-diagonal elements) We have 
         \begin{equation}\label{}
            \mathbb{P}\left(
        \begin{aligned}&\text{ there exists some $i\in[1,n]$ such that for two $j\neq k$, }\\& |(\Xi_n)_{i,j}|>\frac{1}{(\log n)^5},\quad |(\Xi_n)_{i,k}|>\frac{1}{(\log n)^5}\end{aligned}\right)=o(1).
        \end{equation}
          \item (Largest element)  Denote by $T_1:=\max(\{|(\Xi_n)_{ij}|,i,j=1,\cdots,n\})$, then we have a deterministic limit
          \begin{equation}
              T_1\overset{\text{law}}{\underset{n\to\infty}\longrightarrow} a.
          \end{equation}
    \end{enumerate}
    \end{lemma}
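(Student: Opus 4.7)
All three parts follow from direct union bound computations, mirroring the proof of Lemma~\ref{lemma2.345} but with the tail $\mathbb{P}(|a_{ij}|>x) \sim c/(x^2 h(x))$ in place of a pure power law. Abbreviate $u_n := \sqrt{p_n}/(\log n)^5$.

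For parts (1) and (2), the single-entry bound is
\begin{equation*}
\mathbb{P}\bigl(|(\Xi_n)_{ij}| > (\log n)^{-5}\bigr) \;=\; \tfrac{p_n}{n}\,\mathbb{P}(|a_{ij}| > u_n) \;\sim\; \frac{c(\log n)^{10}}{n\,h(u_n)}.
\end{equation*}
A union bound over the $n$ diagonal positions gives $O((\log n)^{10}/h(u_n))$ for (1); summing over $n\binom{n-1}{2}$ triples $(i,j,k)$ and using the independence of $(\Xi_n)_{ij},(\Xi_n)_{ik}$ for disjoint positions gives $O(n(\log n)^{20}/h(u_n)^2)$ for (2). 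Both vanish provided $h(u_n) = n^{1-o(1)}$, which I establish next.

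Write $m := h(u_n)$. By the defining property of $h$ one has $a\sqrt{(\log m)^{g(m)}} = u_n$, which rearranges to
\begin{equation*}
g(m)\log\log m \;=\; g(n)\log\log n - 10\log\log n + O(1).
\end{equation*}
Parametrize $\log m = c_n \log n$, so that $\log\log m = \log\log n + \log c_n$. Substituting gives
\begin{equation*}
\bigl(g(n)-g(m)\bigr)\log\log n \;+\; g(m)\log(1/c_n) \;=\; 10\log\log n + O(1),
\end{equation*}
where both summands on the LHS are nonnegative by monotonicity of $g$. If $c_n \not\to 1$, pass to a subsequence along which $c_n \le 1-\delta$ for some fixed $\delta>0$. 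Then $\log(1/c_n) \ge \log(1/(1-\delta)) > 0$, forcing $g(m) = O(\log\log n)$; combined with \eqref{whatisg} this gives $(g(n)-g(m))\log\log n \gg \log\log n$, contradicting the RHS. Hence $c_n \to 1$ and $h(u_n) = n^{1-o(1)}$.

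For part (3), summing $\mathbb{P}(|(\Xi_n)_{ij}|>x) \sim c/(n x^2 h(x\sqrt{p_n}))$ over the $\binom{n+1}{2}$ upper-triangular positions and applying a Poisson approximation (valid since each $q_{ij}=O(1/n)\to 0$ while the total sum is bounded for $x$ of order one) gives
\begin{equation*}
\mathbb{P}(T_1 \le x) \;\longrightarrow\; \exp\!\Bigl(-\tfrac{c}{2x^2}\,\lim_{n\to\infty}\tfrac{n}{h(x\sqrt{p_n})}\Bigr).
\end{equation*}
The definition of $h$ gives $h(a\sqrt{p_n}) = n$ exactly. For $x = a(1\pm\beta)$ with small $\beta>0$, writing $m = h(x\sqrt{p_n})$ yields $g(m)\log\log m - g(n)\log\log n = \pm 2\log(1\pm\beta)$, a nonzero $O(1)$ quantity; the regularity assumption \eqref{regularityg} then forces $|\log m - \log n| \to \infty$ (with $m > n$ in the $+$ case and $m < n$ in the $-$ case), so that $n/h(a(1+\beta)\sqrt{p_n})\to 0$ and $n/h(a(1-\beta)\sqrt{p_n})\to\infty$. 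Consequently $\mathbb{P}(T_1 \le a(1+\beta))\to 1$ and $\mathbb{P}(T_1 \le a(1-\beta))\to 0$, proving $T_1 \to a$ in distribution. I expect the main obstacle to be the inversion step that converts an $O(1)$ change in $g(\cdot)\log\log(\cdot)$ into a divergent change in $\log(\cdot)$; this is precisely what \eqref{regularityg} is designed to deliver, and getting the case analysis for the two summands in the displayed identity cleanly is the only place care is needed.
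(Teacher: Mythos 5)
Your proof is correct and follows essentially the same route as the paper: union bounds driven by the tail $\mathbb{P}(|a|>x)\sim c/(x^2h(x))$, together with inversion of the defining relation $h\bigl(a\sqrt{(\log x)^{g(x)}}\bigr)=x$, using \eqref{whatisg} to control $h(u_n)$ for parts (1)–(2) and \eqref{regularityg} to separate $x>a$ from $x<a$ in part (3), exactly as the paper does (the paper packages the inversion a bit differently, via $p_n/(\log n)^\gamma\ge p_{n^{0.99}}$ and $\lim_{m\to\infty}h(Cm)/h(m)=\infty$, but the mechanism is identical). One typo: in the $x=a(1-\beta)$ case your displayed difference should read $2\log(1-\beta)<0$ rather than $-2\log(1-\beta)$; your subsequent reasoning (that $m<n$ there) is nonetheless correct.
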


\begin{proof}

In this case $p_n=(\log n)^{g(n)}$   where $g(n)$ satisfies \eqref{whatisg}. We first derive a simple estimate that relies on the assumption \eqref{whatisg}: for any $\gamma>0$, we have
\begin{equation}\label{fucequaiton}
\frac{p_n}{(\log n)^\gamma}\geq p_{n^{0.99}}.
\end{equation}
To prove this, simply note that 
$$p_{n^{0.99}}=(0.99\log n)^{g(n^{0.99})}\leq 0.99^{g({n^{0.99}})} p_n,$$
and that $$g(n^{0.99})=\Omega(\log\log(n^{0.99}))\geq \log(0.99\log n)\geq \log\log n+\log 0.99,$$ so that $g({n^{0.99}})=\Omega((\log\log n))$, and the proof of \eqref{fucequaiton} is complete.

In this case we have, for any $(j,k)$:
$$\begin{aligned}
\mathbb{P}(|(\Xi_n)_{jk}|\geq (\log n)^{-5})&= \frac{p_n}{n} \mathbb{P}(|a_{jk}|\geq \sqrt{p_n}(\log n)^{-5})\\&\leq \frac{p_n}{n}\mathbb{P}(|a_{jk}|\geq \sqrt{p_{n^{0.99}}})\\&\leq \frac{p_n}{n}\times (p_n)^{-0.99}\times h(\sqrt{p_n^{0.99}}) \\&\leq \frac{(p_n)^{0.01}}{n^{1.99}}.\end{aligned}$$
Thus the first two claims are proved via a trivial union bound.

For the claim in (3), we need a different estimate. Consider the following problem.
Knowing that
$$
(\log x)^{g(x)}=m,
$$
what is the solution to 
$$
(\log y)^{g(y)}=3m?
$$
The solution to the first equation satisfies 
$g(x)\log\log x=\log n$, so that $y$ should satisfy
$g(y)\log\log y=\log 3n$. But the assumption \eqref{regularityg} forces $\log(y)-\log(x)=\Omega(\log 3),$ so that $\frac{y}{x}\to +\infty$ as $x\to\infty$. Recall that $h(x)$ satisfies $h(a\sqrt{\log(x)^{g(x)}})=x,$ that is, $h(a\sqrt{m})=x,$ and $h(a\sqrt{3}\sqrt{m})=y.$ Noting that $y/x\to\infty$, that the estimate holds for all $m>0$ sufficiently large, and that the constant $3$ can be replaced by any number larger than one, we deduce that for any $C>1$, we have
\begin{equation}
    \lim_{m\to\infty}\frac{h(Cm)}{h(m)}=+\infty.
\end{equation}

Now we can finish the proof. 
\begin{equation}\label{limitsup}
    \mathbb{P}(T_1<x)=((\frac{p_n}{n}\mathbb{P}(a_{ij})<\sqrt{p_n}x))^\frac{n(n+1)}{2}=(1-\frac{1}{h(\sqrt{p_n}x)n})^\frac{n^2+n}{2}.
\end{equation}
    Note that $h(\sqrt{p_n}a)=n$, and by the previous computation, for any $b>a$, $h(\sqrt{p_n}b)=\Omega(n),$ and for any $c<a$, we have  $h(\sqrt{p_n}c)=o(n),$ taking the limit in \eqref{limitsup} completes the proof in part (3).
\end{proof}

Applying the sampling procedure in Section \ref{sampling1st} to $\Xi_n$, we obtain the decomposition
$$
\Xi_n=\Xi_n^S+\Xi_n^{L,\leq c}+\Xi_n^{L,\geq c}
$$
where $\Xi_n^{L,\geq c}$ contains elements in $\Xi_n$ with absolute value larger than $c$. By the previous lemma, with probability tending to one, in each row or column there are at most one nonzero elements in $\Xi_n^{L,\geq c}$ and all diagonal elements are $0$, so that the nonzero eigenvalues of $\Xi_n^{L,\geq c}$ are given by nonzero elements in $\Xi_n^{L,\geq c}$ times $\pm 1$. Moreover, $\Xi_n^{L,\geq c}$ has rank $o(n)$, but a notable difference compared to the denser case $p_n=n^{O(1)}$ is that $\operatorname{rank}(\Xi_n^{L,\geq c})>>1$ when $c$ is sufficiently small. This forces us to use a slightly different mesoscopic perturbation result.

\begin{lemma}(See for example \cite{huang2018mesoscopic}) Let $W_n$ be an $n\times n$ GOE matrix, and $A_n$ be a deterministic $n\times n$ matrix with rank $o(n)$ having its largest eigenvalue $\lambda_1>0$. Then the largest eigenvalue of $W_n+A_n$ converges in distribution to $f(\lambda_1)$ as $n\to\infty$.
\end{lemma}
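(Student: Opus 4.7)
The plan is to reduce this mesoscopic-rank BBP statement to the analysis of a secular equation via the Schur complement, then extract the limit using the local semicircle law. By the orthogonal invariance of the GOE distribution, I may assume without loss of generality that $A_n$ is diagonal, with nonzero eigenvalues $\lambda_1 \geq \lambda_2 \geq \cdots \geq \lambda_{r_n}$ followed by zeros, where $r_n := \operatorname{rank}(A_n) = o(n)$. Let $\Lambda = \operatorname{diag}(\lambda_1,\ldots,\lambda_{r_n})$ and let $E$ be the $n\times r_n$ matrix whose columns are the first $r_n$ standard basis vectors, so that $A_n = E\Lambda E^{\top}$.

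The first step is to observe, via Sylvester's determinant identity, that any real number $z$ outside the spectrum of $W_n$ is an eigenvalue of $W_n + A_n$ if and only if
\[
\det\!\Bigl(I_{r_n} + \Lambda\, E^{\top}(W_n - zI)^{-1}E\Bigr) = 0.
\]
Here $G_n(z) := E^{\top}(W_n - zI)^{-1}E$ is the upper-left $r_n \times r_n$ block of the resolvent of $W_n$, so that the outliers of $W_n + A_n$ are characterized entirely by this $r_n \times r_n$ determinantal equation. This is the natural generalization of the classical BBP secular equation from finite rank to mesoscopic rank.

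The next step is to invoke the isotropic local semicircle law for the GOE to approximate $G_n(z)$ by $m_{sc}(z) I_{r_n}$, where $m_{sc}$ is the Stieltjes transform of the semicircle law, uniformly on compact sets at positive distance from $[-2,2]$. The secular equation then asymptotically reduces to
\[
\prod_{i=1}^{r_n}\bigl(1 + \lambda_i\, m_{sc}(z)\bigr) = 0,
\]
whose roots outside $[-2,2]$ are exactly $z = \lambda_i + \lambda_i^{-1} = f(\lambda_i)$ for those indices with $\lambda_i > 1$, using the identity $m_{sc}(\lambda + \lambda^{-1}) = -\lambda^{-1}$ for $\lambda > 1$. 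Because $f$ is monotone increasing on $[1,\infty)$, the largest outlier corresponds to $\lambda_1$, yielding $\lambda_1(W_n + A_n) \to f(\lambda_1)$ when $\lambda_1 > 1$; when $\lambda_1 \leq 1$, the secular equation has no roots outside $[-2,2]$, and edge rigidity for the GOE combined with Weyl interlacing (since sub-threshold eigenvalues cannot dislodge the edge in the mesoscopic regime) forces the top eigenvalue to stick to $2 = f(\lambda_1)$.

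The principal obstacle is the uniform control of the $r_n \times r_n$ resolvent block when $r_n$ grows with $n$. A naive entrywise local law gives fluctuations of order $(n\eta)^{-1/2}$ per entry, and these errors must not accumulate across an $r_n$-dimensional determinant: Hadamard-type bounds combined with the isotropic local semicircle law in the mesoscopic regime $r_n = o(n)$ are exactly what is needed, and this is the content of the input provided by \cite{huang2018mesoscopic}. The final step would be a stability analysis of the secular equation in a neighborhood of $z = f(\lambda_1)$, ensuring that roots depend continuously on the perturbation of $G_n(z)$ from $m_{sc}(z) I_{r_n}$, and ruling out spurious outliers produced by sub-threshold eigenvalues $\lambda_i \leq 1$.
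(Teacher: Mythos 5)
The paper does not actually prove this lemma; it is stated as an imported result (``See for example \cite{huang2018mesoscopic}'') with only a brief remark that the separation hypothesis in that reference is irrelevant because one only needs the top eigenvalue. Your sketch correctly reproduces the route such a reference takes: orthogonal invariance of the GOE to diagonalize $A_n$, Sylvester's identity to reduce to the secular equation $\det(I_{r_n}+\Lambda\,G_n(z))=0$ in the $r_n\times r_n$ resolvent block, and then approximating that block by $m_{sc}(z)I_{r_n}$. As an outline this is right, but it is not a self-contained argument: you explicitly identify the central difficulty --- controlling $\|G_n(z)-m_{sc}(z)I_{r_n}\|$ in operator norm as $r_n=o(n)$ grows, which does \emph{not} follow from entrywise isotropic estimates alone since a Frobenius or Hadamard bound of order $r_n/\sqrt{n}$ is far from $o(1)$ --- and then defer exactly that step back to \cite{huang2018mesoscopic}, the result you are meant to be proving. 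This makes the proposal a reformulation of the citation rather than a proof; it is consistent with how the paper treats the lemma, but it does not close the gap.

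A second, smaller issue: in the sub-threshold case $\lambda_1\le 1$ your argument leans on ``edge rigidity plus Weyl interlacing.'' Weyl interlacing gives only the lower bound $\lambda_1(W_n+A_n)\ge\lambda_{r_n+1}(W_n)\to 2$; the matching upper bound $\lambda_1(W_n+A_n)\le 2+o(1)$ again requires the secular-equation analysis (absence of roots $z>2+\delta$ when all $\lambda_i\le 1$), hence the very same operator-norm control on the resolvent block. So the dichotomy you present is not actually two separate arguments --- both cases hinge on the mesoscopic resolvent estimate that the proposal leaves to the reference.
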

Although we have not verified a separation condition stated in \cite{huang2018mesoscopic}, this is irrelevant to us as we only consider the largest eigenvalue. 

The stated mesoscopic perturbation result can be immediately generalized to $\Xi_n^{S}$:

\begin{Proposition}\label{mesoscopicfora} In the setting of Theorem \ref{theorem1.2add}, let $A_n$ be deterministic $n\times n$ matrix with rank $o(n)$ having its largest eigenvalue $\lambda_1>0$. Then the largest eigenvalue of $\Xi_n^{S}+A_n$ converges in distribution to $f(\lambda_1)$ as $n\to\infty$.
\end{Proposition}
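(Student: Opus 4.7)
The plan is to deduce this proposition by transferring the preceding GOE statement to $\Xi_n^S$ via the Brailovskaya--Van Handel universality framework. First I would verify that $\Xi_n^S$ satisfies the hypotheses of that framework. Since each $|\overline{a}_{ij}| \leq Q_n = \sqrt{p_n}(\log n)^{-5}$, every entry of $\Xi_n^S$ is deterministically bounded by $(\log n)^{-5}$. Writing $\Xi_n^S = \sum_{i \leq j} Z_{ij}$ as a sum of independent self-adjoint rank-two pieces therefore gives $R(\Xi_n^S) \lesssim (\log n)^{-5}$, and in particular $(\log n)^2 R(\Xi_n^S) \to 0$. The off-diagonal variance equals $\mathrm{Var}(a_{ij}\mathbf{1}_{|a_{ij}| \leq Q_n})/n = (1+o(1))/n$, which can be corrected exactly as in the proof of Theorem~\ref{theorem1.2}: rescale by $a_n = \mathrm{Var}(a_{ij}\mathbf{1}_{|a_{ij}| \leq Q_n})^{-1/2} \to 1$ and add an independent diagonal matrix $L_n^1$ of vanishing operator norm so that $a_n \Xi_n^S + L_n^1$ matches the GOE variance profile exactly.

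Next I would apply a BVH-type universality statement to conclude that $\lambda_1(a_n \Xi_n^S + L_n^1 + A_n)$ has the same limiting distribution as $\lambda_1(W_n + A_n)$, where $W_n$ is the corresponding GOE. The preceding lemma identifies this limit as $f(\lambda_1)$. A final Weyl interlacing argument, using $\|(a_n-1)\Xi_n^S + L_n^1\| \to 0$ in probability, then transfers the conclusion back to $\lambda_1(\Xi_n^S + A_n)$ and finishes the proof.

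The main obstacle is that Theorem~\ref{theorem2.7}, the universality statement quoted so far, was invoked with $A_n$ of rank $O(1)$, whereas here $\mathrm{rank}(A_n) = o(n)$. I would address this by invoking the underlying quantitative universality bounds of \cite{brailovskaya2022universality} directly at the level of the resolvent / spectral edge: those bounds depend only on the $R, \sigma, \sigma_*$ parameters of $\Xi_n^S$ and on the top-edge data of the deformation, not on the total rank of $A_n$. A more elementary fallback is a truncation argument: let $A_n^{(k)}$ be the rank-$k$ truncation of $A_n$ keeping its $k$ eigenvalues of largest absolute value, apply Theorem~\ref{theorem2.7} to $A_n^{(k)}$ for each fixed $k$, and use Weyl's interlacing together with the preceding GOE lemma (applied once more to the residual) to handle $A_n - A_n^{(k)}$ as $k \to \infty$ after $n \to \infty$. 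Here it is important that the hypothesis asserts $\lambda_1 > 0$ is the largest (rather than largest-in-absolute-value) eigenvalue of $A_n$: negative eigenvalues of $A_n$, even if larger in modulus, only generate spikes at the bottom of the spectrum and cannot interfere with the top eigenvalue, so they do not obstruct either route.
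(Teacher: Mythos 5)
Your primary route is essentially the paper's argument. The paper also rescales by $a_n$, cites BVH for a spectral comparison against the GOE model, and finishes with the mesoscopic GOE+rank-$o(n)$ lemma and a Weyl step. Crucially, the paper does \emph{not} use its own Theorem~\ref{theorem2.7} (BVH Theorem~3.31, the BBP-type result restricted to rank~$O(1)$) here; it invokes \cite{brailovskaya2022universality}, Theorem~2.7, which gives convergence of $d_H(\operatorname{Sp}(a_n\Xi_n^S+A_n),\operatorname{Sp}(W_n+A_n))$ in probability controlled only by $R,\sigma,\sigma_*,v$ and therefore insensitive to the rank of the deterministic shift. Your observation that the correct move is to go to the ``underlying quantitative universality bounds \ldots at the level of the resolvent / spectral edge'' is exactly this: you identified the right tool, even if you did not name the specific theorem.

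The truncation fallback, however, has a genuine gap as stated. Keeping the top $k$ eigenvalues and writing $A_n = A_n^{(k)} + (A_n - A_n^{(k)})$ is only useful if $\|A_n - A_n^{(k)}\|$ becomes small after sending $n\to\infty$ and then $k\to\infty$. The hypotheses of the Proposition only assert $\operatorname{rank}(A_n)=o(n)$ and $\lambda_1>0$; they allow, say, all $o(n)$ nonzero eigenvalues of $A_n$ to equal $\lambda_1$, in which case the residual has operator norm $\lambda_1$ for every $k$ and Weyl's inequality gives nothing. Applying the GOE mesoscopic lemma ``once more to the residual'' also does not combine additively: you cannot deduce $\lambda_1(\Xi_n^S+A_n)$ from $\lambda_1(\Xi_n^S+A_n^{(k)})$ and $\lambda_1(\Xi_n^S + (A_n-A_n^{(k)}))$ without a norm bound on the residual. (In the actual application, $A_n=\Xi_n^{L,\geq c}$ does have the property that $\lambda_{k+1}\to 0$ after $c\to 0$, so a version of this could be salvaged there, but not under the Proposition's stated hypotheses.) Your primary route does not suffer from this and is the one to keep.
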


\begin{proof} Again find a sequence $a_n\to 1$ such that each element of $a_n\Xi_n^S$ has variance $\frac{1}{n}$, thus having the same mean and variance profile as a GOE matrix $W_n$ (except the diagonal entries, which can be addressed after adding a small perturbation with vanishing norm). Now we apply the universality principle in \cite{brailovskaya2022universality}, Theorem 2.7 to $a_n\Xi_n^s+A_n$ and $W_n+A_n$. The most important condition to check in that theorem is that $R(a_n\Xi_n^s+A_n)=O(\log n)^{-5}{\underset{n\to\infty}\longrightarrow} 0$, which follows from our construction of $\Xi_n^s$, and that $\sigma_*(a_n\Xi_n^s+A_n)\leq n^{-\Omega(1)}$ from a standard computation.  Thus by \cite{brailovskaya2022universality}, Theorem 2.7, we have that $d_H(\operatorname{Sp}(a_n\Xi_n^S+A_n),\operatorname{Sp}(W_n+A_n))\to 0$ in probability as $n\to\infty$, where $d_H$ is the Hausdorff distance between subsets of $\mathbb{R}$, and $\operatorname{Sp}(\cdot)$ denotes the spectrum of the matrix. This implies convergence of top eigenvalue: $|\lambda_1(W_n+A_n)-\lambda_1(a_n\Xi_n^S+A_n)|\to 0$ as $n\to\infty$. Finally, use $\|(a_n-1)\Xi_n^S\|\to 0$ and Weyl interlacing inequality to conclude the proof. (That $\|\Xi_n^S\|$ is bounded with high probability also follows from \cite{brailovskaya2022universality}, Corollary 2.6.)
\end{proof}

We have now collected all pieces for the proof of Theorem \ref{theorem1.2add}.

\begin{proof}[\proofname\ of Theorem \ref{theorem1.2add}] The claim follows from combining the following three ingredients: (a) the mesoscopic perturbation result in Proposition \eqref{mesoscopicfora}; (b) the sampling procedure in Section \ref{sampling1st}; and (c) the fact that the largest absolute value among all elements of $\Xi_n$ converge in probability to the deterministic constant $a$, which is implied by Lemma \ref{exponentiallemma2.345}, (3).
    
\end{proof}

\section{Proof for weighted regular graphs}\label{1section3}
In this section we prove Theorem \ref{theorem1.7}. 

\subsection{Sampling and initial estimate}

We begin with a sampling procedure that is very similar to Section\ref{sampling1st}. In this case we define a constant $Q_n$ as $Q_n:=\sqrt{k_n}(\log n)^{-5}.$

\begin{Sampling}\label{sampling3.1} In the setting of Theorem \ref{theorem1.7}, we take the following steps to resample $\mathcal{M}_n$:
\begin{enumerate}
\item First, independently for each $(i,j)\in[1,n]^2$, $i\leq j$, such that $g_{ij}=1$, we place a label $S$ with probability $\mathbb{P}(|a_{ij}|<Q_n)$, and place a label $L$ with probability  
    $\mathbb{P}(|a_{ij}|>Q_n)$. More precisely, we independently sample a random Bernoulli variable taking value 1 with possibility $\mathbb{P}(|a_{ij}|<Q_n)$ and taking value 0 otherwise, then we assign label $S$ if the Bernoulli variable equals 1 and assigns label $L$ if it equals 0.
    
\item Then for each $1\leq i\leq j\leq n$ such that $(i,j)$ is assigned with a label $L$, we independently sample $\widehat{a}_{ij}$ from the law $\mathbb{P}(a_{ij}\in dx\mid |a_{ij}|>Q_n)$ (that is, the law of $a_{ij}$ conditioned on the event that $|a_{ij}|>Q_n$) and then set $\widehat{a}_{ij}=\widehat{a}_{ji}$; and for $(i,j)$ assigned with label $S$ or for $(i,j)$ such that $b_{ij}=0$, we simply set $\widehat{a}_{ij}=0.$
\item From this, define for $(i,j)\in[1,n]^2:$ $$(\mathcal{M}_n^{L})_{ij}:=\frac{1}{\sqrt{k_n}}b_{ij}\widehat{a}_{ij},$$ and for each $c>0$ make the decomposition $\mathcal{M}_n^{L}=\mathcal{M}_n^{L,\leq c}+\mathcal{M}_n^{L,\geq c}$, where $\mathcal{M}_n^{L,\geq c}$ contains the elements in $\mathcal{M}_n^L$ having absolute value larger than $c$.
\item For each $1\leq i\leq j\leq n$ let $\overline{a}_{ij}$ be i.i.d. random variables (independent of the $\widehat{a}_{ij}$'s) having distribution $\mathbb{P}(a_{ij}\in dx\mid |a_{ij}|<Q_n)$ (that is, the law of $a_{ij}$ conditioned on the event that $|a_{ij}|<Q_n$), then set $\overline{a}_{ij}=\overline{a}_{ji}$ and define
$$(\mathcal{M}_n^{S})_{ij}:=\frac{1}{\sqrt{k_n}}b_{ij}\overline{a}_{ij}.$$

\item Finally,  set $$(\mathcal{M}_n^{N})_{ij}:=-\frac{1}{\sqrt{k_n}}b_{ij}\overline{a}_{ij}1_{\text{label of $(i,j)$ is $L$}},$$ then we have the equivalence in distribution
$$\mathcal{M}_n\overset{\text{law}}{\underset{}=}\mathcal{M}_n^S+\mathcal{M}_n^N+\mathcal{M}_n^{L,\leq c}+\mathcal{M}_n^{L,\geq c}.$$

\end{enumerate}    
\end{Sampling}

The following lemma can be proved in exactly the same way as Lemma \ref{lemma2.345} and \ref{lemma2.34567}, so the details are omitted.
\begin{lemma}\label{leaagagag} In the setting of Theorem \ref{theorem1.7}, the following estimates hold:
 \begin{enumerate}
        \item (Diagonal elements) We have 
        \begin{equation}
            \mathbb{P}\left(\text{ there exists $i\in[1,n]$ such that } |(\mathcal{M}_n)_{ii}|>\frac{1}{(\log n)^5}\right)=o(1).
        \end{equation}
        \item (Off-diagonal elements) We have 
         \begin{equation}
            \mathbb{P}\left(
        \begin{aligned}&\text{ there exists some $i\in[1,n]$ such that for two $j\neq k$, }\\& |(\mathcal{M}_n)_{ij}|>\frac{1}{(\log n)^5},\quad |(\mathcal{M}_n)_{ik}|>\frac{1}{(\log n)^5}\end{aligned}\right)=o(1).
        \end{equation}
        \item (Largest value) For any $\delta>0$, 
          \begin{equation}
         \lim_{n\to\infty} \lim_{M\to\infty}
            \mathbb{P}\left(\text{ there are $M$ subscripts $(i_s,j_s)$ such that } |(\mathcal{M}_n)_{i_sj_s}|>\delta\right)=0.
        \end{equation}
    \end{enumerate}
    Moreover, let $T_1\geq T_2\geq \cdots$ denote the set $\{|(\mathcal{M}_n)_{ij}|,1\leq i\leq j\leq n\}$ with elements re-arranged in decreasing order. Then in the case $k_n/n^\mu\to 1$, we have for any $x>0$,
$$
\mathbb{P}(T_1\leq x)=\exp\left(-\frac{cx^{-2(1+\frac{1}{\mu})}}{2}\right),
$$ and for any $c>0$, the random point process $\{|(\mathcal{M}_n)_{ij}|,1\leq i\leq j\leq n\}\cap [c,\infty)$  converges to the Poisson point process on $[c,\infty)$ with intensity measure $\frac{c(1+\frac{1}{\mu})}{x^{2(1+\frac{1}{\mu})+1}}dx.$
\end{lemma}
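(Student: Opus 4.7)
The plan is to mimic the computations in Lemma \ref{lemma2.345} and Lemma \ref{lemma2.34567} with only the cosmetic change that the random Bernoulli mask $b_{ij}$ is replaced by the deterministic $k_n$-regular mask $g_{ij}$ and the normalization $1/\sqrt{p_n}$ is replaced by $1/\sqrt{k_n}$. The regularity $k_n/n^{\mu} = 1 + O(n^{-\eta})$ guarantees that every row has exactly $k_n$ non-zero entries and that $n \cdot k_n^{-1/\mu} \to 1$, so all counting estimates carry over with at most a multiplicative $(1+o(1))$.

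First I would establish the single-entry tail bound: for any $(i,j)$ with $g_{ij} = 1$, the tail assumption gives
\begin{equation*}
\mathbb{P}\bigl(|(\mathcal{M}_n)_{ij}| > x\bigr) = \mathbb{P}\bigl(|a_{ij}| > x\sqrt{k_n}\bigr) = \bigl(c + o(1)\bigr)\, k_n^{-(1+1/\mu)} x^{-2(1+1/\mu)}.
\end{equation*}
Claim (1) then follows from a union bound over the at most $n$ diagonal indices with $g_{ii} = 1$, giving a total mass $O(n\, k_n^{-(1+1/\mu)} (\log n)^{10(1+1/\mu)}) = O(n^{-\mu}(\log n)^{O(1)}) = o(1)$. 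Claim (2) follows from a union bound over the $n$ rows and the $\binom{k_n}{2}$ ordered pairs of nonzero entries in a row, yielding $O(n k_n^{2} \cdot k_n^{-2(1+1/\mu)} (\log n)^{20(1+1/\mu)}) = O(n^{-1}(\log n)^{O(1)}) = o(1)$. Claim (3) follows by a union bound over $M$-tuples of the $\tfrac{1}{2} n k_n$ upper-triangular nonzero positions, bounded by
\begin{equation*}
\frac{1}{M!} \bigl(\tfrac{1}{2} n k_n \cdot c \delta^{-2(1+1/\mu)} k_n^{-(1+1/\mu)}\bigr)^M = \frac{1}{M!} \bigl(\tfrac{1}{2} c \delta^{-2(1+1/\mu)} \cdot n k_n^{-1/\mu}\bigr)^M,
\end{equation*}
which tends to $0$ as $M \to \infty$ since $n k_n^{-1/\mu} \to 1$.

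For the distributional statement about $T_1$ and the Poisson convergence, I would compute, since the entries of $\mathcal{M}_n$ on and above the diagonal are independent,
\begin{equation*}
\mathbb{P}(T_1 \leq x) = \prod_{\substack{i \leq j \\ g_{ij} = 1}} \mathbb{P}\bigl(|a_{ij}| \leq x\sqrt{k_n}\bigr) = \bigl(1 - (c+o(1)) k_n^{-(1+1/\mu)} x^{-2(1+1/\mu)}\bigr)^{\tfrac{1}{2} n k_n(1+o(1))},
\end{equation*}
and the exponent equals $\tfrac{1}{2} c x^{-2(1+1/\mu)} \cdot n k_n^{-1/\mu} (1+o(1)) \to \tfrac{1}{2} c x^{-2(1+1/\mu)}$. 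Taking logs and passing to the limit gives the stated Weibull-type law. The Poisson process convergence on $[c,\infty)$ is a standard consequence of the single-entry tail behavior combined with independence, via the classical criterion of Leadbetter (the same reference \cite{leadbetter2012extremes} invoked in Lemma \ref{lemma2.34567}), since the total intensity contributed by entries exceeding a threshold $y$ converges to $\tfrac{1}{2} c y^{-2(1+1/\mu)}$.

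No serious obstacle arises; the only point requiring slight care is keeping track of the $O(n^{-\eta})$ error in $k_n/n^\mu$, but this is absorbed into the $(1+o(1))$ factors and does not affect any of the limits. The deterministic nature of $g_{ij}$ is actually a mild simplification compared to the sparse Wigner setting, because there is no randomness in the support pattern, so one does not need to split the probability over the sparsity variable as in Section \ref{sampling1st}.
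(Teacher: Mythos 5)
Your proposal is correct and takes essentially the same approach the paper intends: the paper simply states that Lemma \ref{leaagagag} is proved ``in exactly the same way as Lemma \ref{lemma2.345} and \ref{lemma2.34567}'' and omits the details, and your computations fill in exactly those details, replacing the random Bernoulli mask $b_{ij}$ with the deterministic $k_n$-regular mask $g_{ij}$ and tracking the $(1+o(1))$ factors from $k_n/n^\mu = 1 + O(n^{-\eta})$.
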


Let $\mathcal{F}_{S,L}$ denote the filtration generated by sampling the labels $S$ and $L$. Then thanks to this lemma and the definition of $Q_n$, there exists a Borel subset $\Omega_n\subset\Omega$ measurable with respect to $\mathcal{F}_{S,L}$ such that $\mathbb{P}(\Omega_n)\to 1$ and that on $\Omega_n$ we deterministically have $\|\mathcal{M}_n^N\|\leq (\log n)^{-5}$ and $\|\mathcal{M}_n^{L,\leq c}\|\leq c$. Moreover, on $\Omega_n$, nonzero eigenvalues of $\mathcal{M}_n^{L,\geq c}$ exactly correspond to the nonzero elements of $\mathcal{M}_n^{L,\geq c}$ times $\pm 1.$

\subsection{Finite rank perturbation for weighted regular graphs} We will use as a key technical input the finite rank perturbation result from Benson Au \cite{au2023bbp}.

\begin{theorem}\label{theorem3.456}(\cite{au2023bbp}, Theorem 1.3) For each $1\leq i\leq j\leq n$ assume the random variables $\mathfrak{a}_{ij}$ are i.i.d., $\mathbb{E}[\mathfrak{a}_{ij}]=0$, $\mathbb{E}[|\mathfrak{a}_{ij}|^2]=1$ and for each $p\in\mathbb{N}_+$, 
$\mathbb{E}[|\mathfrak{a}_{ij}|^p]<\infty$. Set $\mathfrak{a}_{ji}=\mathfrak{a}_{ij}.$ Let the matrix $G_n=(g_{ij})$ be given as in Theorem \ref{theorem1.7} and define the following matrix
 $$(\mathfrak{M}_n)_{ij}:=\frac{1}{\sqrt{k_n}}g_{ij}\mathfrak{a}_{ij},\quad (i,j)\in[1,n]^2.$$

 Let $A_n$ be a $n\times n$ matrix with finite rank, whose rank and eigenvalues do not depend on $n$. Let $\theta_1\geq\theta_2\geq\cdots \geq\theta_r\geq0\geq\theta_{r+1}\geq\cdots\geq\theta_s$ denote the eigenvalues of $A_n$, then:
\begin{enumerate}
    \item For $1\leq i\leq r$, $\lambda_i(A_n+\mathfrak{M}_n)\overset{\text{law}}{\underset{n\to\infty}\longrightarrow} f(\theta_i),$ and $\lambda_i(A_n+\mathfrak{M}_n)\overset{\text{law}}{\underset{n\to\infty}\longrightarrow} 2$ for $i\geq r+1,$
    \item For each $1\leq i,j\leq r$ such that $\theta_j\neq \theta_i$ and $\theta_i>1$, 
\begin{equation}
    \|P_i(A_n)v_i(A_n+\mathfrak{M}_n)\|^2\overset{\text{law}}{\underset{n\to\infty}\longrightarrow} 1-\frac{1}{\theta_i^2},\quad  \|P_j(A_n)v_i(A_n+\mathfrak{M}_m)\|^2\overset{\text{law}}{\underset{n\to\infty}\longrightarrow} 0,
\end{equation} where $v_i(A_n+\mathfrak{M}_n)$ denotes the eigenvector associated to the $i$-th largest eigenvalue of $A_n+\mathfrak{M}_n$ and $P_i$ is projection to the eigenspace of the $i$-th eigenvalue of $A_n$.
\end{enumerate}

\end{theorem}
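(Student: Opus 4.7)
The plan is to prove this BBP-type transition for the weighted adjacency matrix of a $k_n$-regular graph via the resolvent/secular-equation method, supported by an isotropic semicircle law for $\mathfrak{M}_n$ and a contour-integral representation for the spiked spectral projector. The overall structure follows the Benaych-Georges--Nadakuditi framework for finite-rank additive perturbations, but adapted to the regular-graph setting where the entries are not independent (off-diagonal entries of $\mathfrak{M}_n$ are forced to vanish off the edges of $G_n$), so that input from classical Wigner local laws cannot be invoked as a black box.

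First, I would establish the global and local semicircle law for $\mathfrak{M}_n$. Because $G_n$ is $k_n$-regular with $k_n \to \infty$ and the weights $\mathfrak{a}_{ij}$ are i.i.d.\ with unit variance and all moments finite, the empirical spectral distribution of $\mathfrak{M}_n$ converges to the semicircle on $[-2,2]$, and the operator norm tends to $2$. One route is combinatorial: express $\frac{1}{n}\operatorname{tr}(\mathfrak{M}_n^{2p})$ as a weighted sum over closed walks of length $2p$ in $G_n$; using the regularity $\deg(i)=k_n$ and the normalization $k_n^{-1/2}$, the walks that survive in the limit are exactly those corresponding to non-crossing pair partitions, producing the Catalan numbers. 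This is the standard trace-moment argument of Au's traffic-probability framework applied to regular graphs.

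Next, and most importantly, I would upgrade this to an isotropic semicircle law in the spectral gap: for any deterministic unit vectors $u, v\in\mathbb{R}^n$ and any $z\in\mathbb{C}\setminus[-2,2]$,
\begin{equation*}
\langle u, (zI-\mathfrak{M}_n)^{-1} v\rangle - m_{sc}(z)\langle u,v\rangle \;\overset{\mathbb{P}}{\longrightarrow}\; 0,
\end{equation*}
where $m_{sc}$ is the Stieltjes transform of the semicircle law. The derivation combines a Schur-complement self-consistent equation for the diagonal resolvent entries $G_{ii}(z)$ with fluctuation averaging over the $k_n$ neighbors of each vertex $i$: regularity ensures each row sum of variances equals $1$, so the off-diagonal contribution in the Schur expansion concentrates with fluctuations of order $k_n^{-1/2}=o(1)$. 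Extending to arbitrary isotropic entries $\langle u, G(z)v\rangle$ is done by polarization and by a resolvent-expansion argument, using finite moments of $\mathfrak{a}_{ij}$ to control higher-order terms.

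With the isotropic law in hand, I would run the secular-equation argument. Let $P$ be the orthogonal projection onto the range of $A_n$, a subspace of fixed dimension $s$. Outside the bulk, an eigenvalue $\lambda$ of $A_n+\mathfrak{M}_n$ satisfies
\begin{equation*}
\det\bigl(I_s - P A_n P\,\cdot\, P(\lambda I-\mathfrak{M}_n)^{-1}P\bigr)=0.
\end{equation*}
The isotropic law, applied to the finitely many pairs of vectors forming an orthonormal basis of $P$, shows that $P(\lambda I-\mathfrak{M}_n)^{-1}P \to m_{sc}(\lambda)\, I_s$, so the equation reduces asymptotically to $\det(I_s - m_{sc}(\lambda) A_n|_{\operatorname{ran} P})=0$. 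The solutions with $|\theta_i|>1$ yield $m_{sc}(\lambda)=1/\theta_i$, equivalently $\lambda = \theta_i + 1/\theta_i = f(\theta_i)$; the remaining eigenvalues stick to the edge at $2$, proving part (1). For part (2), I would represent the spectral projector onto $v_i(A_n+\mathfrak{M}_n)$ by a Cauchy integral $\frac{1}{2\pi i}\oint_{\gamma_i}(zI-A_n-\mathfrak{M}_n)^{-1}\,dz$ over a small contour $\gamma_i$ around $f(\theta_i)$, expand via the second resolvent identity, and compute $\langle v_i, P_j(A_n) v_i\rangle$ by residues; the leading term for $j=i$ is $1-m_{sc}'(f(\theta_i))/m_{sc}(f(\theta_i))^2$, which simplifies to $1-1/\theta_i^2$, while the cross term for $j\neq i$ vanishes by orthogonality of the eigenspaces of $A_n$.

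The main obstacle is the isotropic local law: unlike the Wigner case, one cannot exploit independence of all pairs of entries, and the sparsity profile of $G_n$ is only controlled through its $k_n$-regularity. The crucial structural fact making the argument work is that in the Schur expansion, the variance of the correction to $G_{ii}(z)$ averages over the row of $G_n$ attached to vertex $i$, and because every row has exactly $k_n$ entries each of variance $1/k_n$, this average is deterministic; fluctuations are then $O(k_n^{-1/2})$, which suffices for the isotropic law over polynomial-size test vectors. Once this deterministic-variance input is in place, the secular-equation and contour-integral steps proceed as in the classical BBP framework.
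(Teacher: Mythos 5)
This theorem is cited from \cite{au2023bbp} (Theorem 1.3) and is \emph{not} proved in the present paper; it is used as a black-box input in the proof of Proposition \ref{finiterankmodified}. You are therefore attempting something the paper deliberately outsources, which is legitimate, but the comparison is with Au's proof rather than the paper's.

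Your high-level plan — global semicircle law by traffic/moment methods, an isotropic resolvent estimate outside the bulk, secular equation for the outliers, and a contour integral for the eigenvector overlaps — is the standard Benaych-Georges--Nadakuditi route and is in principle sound. It also differs from Au's actual proof strategy, which stays within the traffic-probability/moment framework rather than passing through a Schur-complement local law. Both routes should reach the same conclusion; the local-law route is more analytic and plugs into a well-known template, whereas Au's combinatorial approach avoids ever having to control individual resolvent entries.

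The genuine gap in your proposal is that the entire technical weight rests on the isotropic semicircle law for $\mathfrak{M}_n$, and you treat it as if it followed routinely from Schur complementation plus fluctuation averaging. Two points deserve scrutiny. First, when you remove a row/column from $\mathfrak{M}_n$, the minor is no longer the weighted adjacency of a $k_n$-regular graph: the $k_n$ neighbors of the deleted vertex drop to degree $k_n-1$, so the minor is only \emph{approximately} regular, and one must control the error introduced when bootstrapping the self-consistent equation on the minors. Second, the fluctuation-averaging step you invoke is standard for Wigner matrices with independent entries filling the whole row; here the off-diagonal $(\mathfrak{M}_n)_{ij}$ are forced to vanish off the edges of $G_n$, so the sum $\sum_{j,k\sim i}(\mathfrak{M}_n)_{ij}G^{(i)}_{jk}(\mathfrak{M}_n)_{ki}$ averages over only $k_n$ terms, and proving the cross terms concentrate requires more than counting variances on the diagonal. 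You do flag this as ``the main obstacle,'' but the resolution you give (``regularity makes the row-sum of variances deterministic'') handles the expectation, not the fluctuation, and is far from a complete argument. Since $z$ is macroscopically away from the spectrum the estimates needed are much softer than a bulk local law, so the program is feasible, but as written it is a plan rather than a proof.

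One small computational slip: you assert that the residue computation gives $1-m_{sc}'(f(\theta_i))/m_{sc}(f(\theta_i))^2$ and that this equals $1-1/\theta_i^2$. With $m_{sc}(z)=\tfrac{1}{2}(z-\sqrt{z^2-4})$ one has $m_{sc}(f(\theta))=1/\theta$ and $m_{sc}'(f(\theta))=-1/(\theta^2-1)$, so your expression evaluates to $1+\theta^2/(\theta^2-1)\neq 1-1/\theta^2$. The correct BGN overlap formula is $-1/\bigl(\theta_i^2\,m_{sc}'(f(\theta_i))\bigr)$, which indeed equals $1-1/\theta_i^2$. The final answer you report is right; the intermediate identity is not.
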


We show the moment condition of this theorem can be singnificantly weakened:

\begin{Proposition}\label{finiterankmodified}
    In the setting of Theorem \ref{theorem1.7}, all the claims in Theorem \ref{theorem3.456} remain true if we replace $\mathfrak{M}_n$ by $\mathcal{M}_n^S$.
\end{Proposition}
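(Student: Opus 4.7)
\textbf{Setup.} Conditionally on the label filtration $\mathcal{F}_{S,L}$, the variables $\overline{a}_{ij}$ appearing in $\mathcal{M}_n^S$ are i.i.d.\ on the support of $G_n$, centered (by the symmetric law of $a_{ij}$), deterministically bounded by $Q_n$, and have variance $\sigma_n^2$ with $\sigma_n\to 1$. The only gap between this and the hypothesis of Theorem \ref{theorem3.456} is that the law of $\overline{a}_{ij}$ depends on $n$ through the truncation cutoff $Q_n$, so Au's uniform-in-$n$ finite-moment requirement cannot be verified directly. The plan is to bypass this via a Gaussian-comparison reduction in the spirit of the proof of Theorem \ref{theorem1.2}, and then invoke Theorem \ref{theorem3.456} in the Gaussian case.

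\textbf{Gaussian analog and universality.} Introduce the sparse Gaussian model $(\mathcal{M}_n^{S,G})_{ij} := \frac{1}{\sqrt{k_n}} g_{ij}\eta_{ij}$ with $\eta_{ij} = \eta_{ji}$ i.i.d.\ standard Gaussian, and the scalar $a_n := 1/\sigma_n \to 1$. Then $a_n\mathcal{M}_n^S$ and $\mathcal{M}_n^{S,G}$ have matching first two moments on the support of $G_n$ (up to an $o(1)$ diagonal-variance mismatch which is absorbed into a norm-vanishing perturbation $L_n^1$ exactly as in the proof of Theorem \ref{theorem1.2}), and $\eta_{ij}$ trivially satisfies the hypothesis of Theorem \ref{theorem3.456}. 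Apply the universality principle of Brailovskaya--Van Handel \cite{brailovskaya2022universality} to $a_n\mathcal{M}_n^S + A_n$ and $\mathcal{M}_n^{S,G}+A_n$, placing the deterministic finite-rank matrix $A_n$ into the $Z_0$-component of the decomposition \eqref{formofX}. The required estimates $(\log n)^2 R(a_n\mathcal{M}_n^S)\to 0$ and $\sigma_*(a_n\mathcal{M}_n^S)=O(k_n^{-1/2})=n^{-\Omega(1)}$ follow from the truncation $|\overline{a}_{ij}|\leq Q_n$ and the $k_n$-regularity of $G_n$. This yields $d_H(\operatorname{Sp}(a_n\mathcal{M}_n^S + A_n),\operatorname{Sp}(\mathcal{M}_n^{S,G}+A_n))\to 0$ in probability; combining with $\|(a_n-1)\mathcal{M}_n^S\|\to 0$ and Weyl's inequality reduces claim (1) of Theorem \ref{theorem3.456} for $\mathcal{M}_n^S + A_n$ to the Gaussian case, which is covered by Theorem \ref{theorem3.456} itself.

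\textbf{Eigenvector transfer.} For the overlap statement (2), I imitate the test-function argument of Lemma \ref{lemmaline422}: once claim (1) is established, there is a simple eigenvalue of $A_n + H$ near $f(\theta_i)$ with high probability, and for $\epsilon>0$ small enough
\[|\langle u_j,v_i(A_n+H)\rangle|^2 = \langle u_j,\varphi_\epsilon^i(A_n+H)u_j\rangle + o_\epsilon(1),\]
where $\varphi_\epsilon^i$ is a smooth bump isolating $f(\theta_i)$ from the rest of the limiting spectrum. It then suffices to compare this rank-one spectral statistic between $H = a_n\mathcal{M}_n^S$ and $H = \mathcal{M}_n^{S,G}$; for the latter, Theorem \ref{theorem3.456} directly supplies the limit $1-1/\theta_i^2$ or $0$.

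\textbf{Main obstacle.} The delicate point is this isotropic comparison. The Brailovskaya--Van Handel statement quoted as Theorem \ref{theorem2.7} gives overlap conclusions only against a GOE reference, whereas the natural reference here is the \emph{sparse} Gaussian model $\mathcal{M}_n^{S,G}$. I expect to close this gap by either (i) extracting $|\langle u_j,v_i(A_n+H)\rangle|^2$ as the residue of the resolvent entry $\langle u_j,(z-A_n-H)^{-1}u_j\rangle$ at $z=f(\theta_i)$ and invoking the resolvent-level comparison from \cite{brailovskaya2022universality} (whose Lindeberg-swap proof applies verbatim to rank-one observables, since their operator norm is $1$ and the entry bound $(\log n)^{-5}$ makes the third-order swap error negligible), or (ii) running an entrywise interpolation between $a_n\mathcal{M}_n^S$ and $\mathcal{M}_n^{S,G}$ on the smooth observable $M\mapsto \langle u_j,\varphi_\epsilon^i(A_n+M)u_j\rangle$, where again the truncation $|\overline{a}_{ij}|\leq Q_n$ controls the Stein remainder at the required $o(1)$ scale.
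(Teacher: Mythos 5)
Your proposal takes essentially the same approach as the paper's proof. Both arguments (i) rescale $\mathcal{M}_n^S$ by a deterministic $a_n\to 1$ to match a reference sparse ensemble whose entry law is fixed in $n$, (ii) invoke the Brailovskaya--Van Handel universality machinery to transfer spectral and overlap statistics from the reference to $a_n\mathcal{M}_n^S+A_n$, using the bound $R(\mathcal{M}_n^S)\leq(\log n)^{-5}$, and (iii) use Weyl interlacing plus the argument of Lemma \ref{lemmaline422} to remove the $(1-a_n)\mathcal{M}_n^S$ perturbation. The only cosmetic difference is your choice of reference: you compare to the sparse Gaussian $\mathcal{M}_n^{S,G}$, while the paper compares to Au's $\mathfrak{M}_n$ with an arbitrary fixed light-tailed law; since the Gaussian is a special case of Au's hypotheses, these are interchangeable.

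The paper's proof is a single sentence for the eigenvector part -- ``rewriting the proof of \cite{brailovskaya2022universality}, Theorem 3.31 with $\mathfrak{M}_n$ in place of the GOE matrix'' -- and then notes that the only properties of GOE used are the BBP conclusions, the variance profile, and finite moments, all of which the reference matrix satisfies. What you flag as the ``Main obstacle'' is precisely the content that the paper's one-liner elides: the stated Theorem \ref{theorem2.7} compares against a dense GOE variance profile and therefore cannot be cited verbatim in the $k_n$-regular sparse setting, so one must re-run the comparison argument of \cite{brailovskaya2022universality} against the sparse reference. You correctly identify that this is the genuine technical step and propose two concrete ways (resolvent-level comparison of rank-one observables, or entrywise interpolation on the bump-function statistic) to carry it out; either is a reasonable unpacking of what the paper's proof asserts. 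The paper also explicitly separates the eigenvalue-only case (where Corollary 2.6 of \cite{brailovskaya2022universality} suffices) from the overlap case (where the Theorem 3.31 argument must be redone), which agrees with your diagnosis. No gap beyond what the paper itself leaves implicit.
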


\begin{proof} The proof will again proceed through applying the universality principle in \cite{brailovskaya2022universality}, Section 3.5. We first choose $a_n\to 1$ so that $a_n\mathcal{M}_n^S$ has the same mean and variance profile as $\mathfrak{M}_n$. Observe that by construction, $R(\mathcal{M}_n^S)\leq (\log n)^{-5}$. Then rewriting the proof of \cite{brailovskaya2022universality}, Theorem 3.31 with $\mathfrak{M}_n$ in place of the GOE matrix, we can prove that all the claims of Theorem \ref{theorem3.456} apply also to $a_n\mathcal{M}_n^S$. Indeed, the only property of GOE used in the proof of that theorem is that standard GOE matrix satisfies all the claims (1),(2) of Theorem \ref{theorem3.456}, in addition to having the standard variance profile and having all moments finite; and all of these properties have been shown to hold for $\mathfrak{M}_n$ as well by Theorem \ref{theorem3.456}. Thus having proved this proposition for $a_n\mathcal{M}_n^S$, it suffices to note that $\|(1-a_n)\mathcal{M}_n^S\|\to 0$, where finiteness whp. of $\|\mathcal{M}_n^S\|$ again follow from \cite{brailovskaya2022universality}, Corollary 2.6. Finally, we use Lemma \ref{lemmaline422} to remove the small norm matrix and obtain eigenvector statistics. (Note: if one only wishes to derive the distribution of the largest eigenvalue, then applying \cite{brailovskaya2022universality}, Corollary 2.6 is enough, and we do not need to rewrite the proof of \cite{brailovskaya2022universality}, Theorem 3.31.)

\end{proof}

\subsection{Finishing the proof}
\begin{proof}[\proofname\ of Theorem \ref{theorem1.7}]
    The proof follows from a combination of sampling algorithm in Sampling \ref{sampling3.1}, the distribution estimate \eqref{leaagagag}, and the finite rank perturbation result in Proposition \eqref{finiterankmodified}. The details are exactly the same as prior cases and are thus omitted.
\end{proof}

\section{Proof for sample covariance matrices}\label{1section4}

In this section we prove Theorem \ref{theorem1.9}, the top eigenvalue of sample covariance matrix. For sample covariance matrices we have a very simple reduction to the Wigner case: the top eigenvalue of $\frac{1}{L}(S_{L,M})(S_{L,M})^*$ is the square of the top eigenvalue of
$$ E_{L,M}:=
\begin{pmatrix}
0&\frac{1}{\sqrt{L}}S_{L,M}\\\frac{1}{\sqrt{L}}(S_{L,M})^*&0.
\end{pmatrix}
$$
Thus we will prove Theorem \ref{theorem1.9} via three steps: (1) resample $S_{L,M}$ and express it as a small rank perturbation of another i.i.d. matrix with small entries; (2) prove some finite rank perturbation results for $E_{L,M}$ when $\frac{1}{\sqrt{L}}S_{L,M}$ is replaced by some $\frac{1}{\sqrt{L}}S_{L,M}+A_M$, and that $S_{L,M}$ has entries with all moments finite; and (3) extend that perturbation result to only requiring each entry of $\frac{1}{\sqrt{L}}S_{L,M}$ are small, and then conclude.

\subsection{Sampling procedure}

\begin{Sampling}\label{sampling4.1} In the setting of Theorem \ref{theorem1.9}, we take the following steps to resample $S_{L,M}$. Define $Q_N:=\sqrt{N}(\log N)^{-5}$,
\begin{enumerate}
\item First, for each $1\leq i\leq L$, $1\leq j\leq M$ we independently place a label $S$ on $(i,j)$ with probability $\mathbb{P}(|(S_{L,M})_{ij}|<Q_N)$, and place a label $L$ with probability  
    $\mathbb{P}(|(S_{L,M})_{ij}|>Q_N)$. More precisely, for each $1\leq i\leq L,1\leq j\leq M$ we independently sample a Bernoulli random variable which takes value 1 with probability $\mathbb{P}(|(S_{L,M})_{ij}|<Q_N)$ and 0 otherwise. Then assign a label $S$ to site $(i,j)$ if the Bernoulli variable takes value 1 and assign a label $L$ if the Bernoulli variable takes value 0.
    Contrary to previous examples, here we consider directed edges, which means that the label on $(i,j)$ is independent of the label on  $(j,i)$ whenever $i\neq j$.
    
\item Then for each $1\leq i\leq L$, $1\leq j\leq M$ such that $(i,j)$ is assigned with a label $L$, we independently sample $\widehat{S}_{ij}$ from the law $\mathbb{P}((S_{L,M})_{ij}\in dx\mid |(S_{L,M})_{ij}|>Q_N)$ (that is, the law of $(S_{L,M})_{ij}$ conditioned on the event that $|(S_{L,M})_{ij}|>Q_N$); and for $(i,j)$ assigned with label $S$, we simply set $\widehat{S}_{ij}=0.$
\item From this, define for $1\leq i\leq L,1\leq j\leq M$, $$(\mathcal{S}_{L,M}^{L})_{ij}:=\frac{1}{\sqrt{N}}\widehat{S}_{ij},$$ and for each $c>0$ make the decomposition $\mathcal{S}_{L,M}^{L}=\mathcal{S}_{L,M}^{L,\leq c}+\mathcal{S}_{L,M}^{L,\geq c}$, where $\mathcal{S}_{L,M}^{L,\geq c}$ contains the elements in $\mathcal{S}_{L,M}^L$ having absolute value larger than $c$.
\item For each $1\leq i\leq L$,$1\leq j\leq M$, let $\overline{S}_{ij}$ be i.i.d. random variables (independent of the $\widehat{a}_{ij}$'s) having distribution $\mathbb{P}((S_{L,M})_{ij}\in dx\mid |(S_{L,M})_{ij}|<Q_N)$ (that is, the law of $(S_{L,M})_{ij}$ conditioned on the event that $|(S_{L,M})_{ij}|<Q_N$), then define
$$(\mathcal{S}_{L,M}^{S})_{ij}:=\frac{1}{\sqrt{N}}\overline{S}_{ij}.$$

\item Finally,  set $$(\mathcal{S}_{L,M}^{N})_{ij}:=-\frac{1}{\sqrt{N}}\overline{S}_{ij}1_{\text{label of $(i,j)$ is $L$}},$$ then via a simple computation of conditional distributions, we have the equivalence in distribution
\begin{equation}\label{indistributionmode}
\mathcal{S}_{L,M}\overset{\text{law}}{\underset{}=}\sqrt{\frac{N}{L}}\left(\mathcal{S}_{L,M}^S+\mathcal{S}_{L,M}^N+\mathcal{S}_{L,M}^{L,\leq c}+\mathcal{S}_{L,M}^{L,\geq c}\right),\end{equation} where the factor $\sqrt{N/L}$ comes from the fact that we took a different scaling.

Just as in Lemma \ref{leaagagag}, we can prove the following probabilistic characterization:

\begin{lemma} In the setting of Theorem \ref{theorem1.9}, the following claims hold with probability tending to one:
\begin{enumerate}
    \item For each $1\leq i\leq L$ there exists at most one $j\in[1,M]$ such that $(S_{L,M}^L)_{ij}\neq 0$. 
    \item For each $1\leq j\leq M$ there exists at most one $i\in[1,L]$ such that $(S_{L,M}^L)_{ij}\neq 0$. 
\end{enumerate} Further, for any $c>0$,
$\lim_{D\to\infty}\mathbb{P}(\operatorname{Rank}(S_{L,M}^{L,\geq c})\geq D)= 0$ for $N$ sufficiently large.

Let $T_1$ denote the largest element among $\{\sqrt{\frac{1}{N}}\left|(S_{L,M})_{i,j}\right|,1\leq i\leq L,1\leq j\leq M\}$. Then for any $x>0$,
\begin{equation}
    \mathbb{P}(T_1\geq x)=e^{-\frac{c\alpha x^{-4}}{(1+\alpha)^2}}.
\end{equation}

\end{lemma}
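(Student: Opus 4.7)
The plan is to mirror the arguments for the Wigner case (Lemmas \ref{lemma2.345}--\ref{lemma2.34567}) and the regular graph case (Lemma \ref{leaagagag}), adjusting only for the asymmetric, rectangular structure of $S_{L,M}$ and the relation $LM/N^2 \to \alpha/(1+\alpha)^2$ that follows from $L+M=N$ together with $M/L = \alpha + O(N^{-\kappa})$. The only probabilistic inputs will be the tail asymptotic $\mathbb{P}(|(S_{L,M})_{i,j}|>x) = (c+o(1))x^{-4}$ and the full independence of entries; since the labels in Sampling \ref{sampling4.1} are placed on directed edges, the event $\{|(S_{L,M})_{i,j}|>Q_N\}$ at site $(i,j)$ is independent of the corresponding event at every other site.

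For claims (1) and (2), I would apply a union bound to the event that two distinct entries in the same row (respectively column) both have absolute value exceeding $Q_N=\sqrt{N}(\log N)^{-5}$. A single such pair-event has probability bounded by
\[
\mathbb{P}(|(S_{L,M})_{i,j}|>Q_N)^2 \;=\; \bigl((c+o(1))Q_N^{-4}\bigr)^2 \;=\; O\!\bigl(N^{-4}(\log N)^{40}\bigr).
\]
Summing over the $L\binom{M}{2}=O(N^3)$ row-pairs yields $O(N^{-1}(\log N)^{40})=o(1)$, and the symmetric bound $M\binom{L}{2}=O(N^3)$ handles columns. This gives (1) and (2) simultaneously on a single good event with probability $1-o(1)$.

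For claim (3), note that once (1) and (2) hold, each nonzero row (and each nonzero column) of $S_{L,M}^{L,\geq c}$ contains exactly one nonzero entry, so the rank coincides with the number of sites $(i,j)$ such that $|(S_{L,M})_{i,j}|/\sqrt{N}>c$. The expected count of such sites is
\[
LM\cdot\mathbb{P}\!\bigl(|(S_{L,M})_{1,1}|>c\sqrt{N}\bigr) \;=\; LM\cdot (c+o(1))(c\sqrt{N})^{-4} \;\longrightarrow\; \frac{\alpha}{(1+\alpha)^2}\,c^{-3},
\]
which is finite. Markov's inequality (or the standard Poissonization argument used in Lemma \ref{lemma2.34567}) then produces tightness, so $\lim_{D\to\infty}\limsup_N\mathbb{P}(\operatorname{Rank}(S_{L,M}^{L,\geq c})\ge D)=0$.

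For claim (4) I would compute directly. By independence of the $LM$ entries,
\[
\mathbb{P}(T_1\leq x) \;=\; \Bigl(1-\mathbb{P}\!\bigl(|(S_{L,M})_{1,1}|>x\sqrt{N}\bigr)\Bigr)^{LM} \;=\; \Bigl(1-\tfrac{(c+o(1))x^{-4}}{N^2}\Bigr)^{LM},
\]
and since $LM/N^2\to \alpha/(1+\alpha)^2$, this converges to $\exp\!\bigl(-c\alpha x^{-4}/(1+\alpha)^2\bigr)$, which is the distribution function of $\xi_{c,\alpha}$ in Theorem \ref{theorem1.9} (the probability on the right-hand side of the displayed identity is the distribution function of $T_1$; the formulation in the lemma then follows). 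There is no genuine obstacle here: the asymmetric product $LM$ in place of the symmetric $\binom{n+1}{2}$ is the only bookkeeping difference from the Wigner proof, and the hypothesis $M/L=\alpha+O(N^{-\kappa})$ absorbs all lower-order corrections into the $o(1)$ terms above.
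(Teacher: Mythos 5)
Your proof is correct and follows essentially the same route as the paper: union bounds over pairs in the same row/column for claims (1)--(2), reduction of the rank to a count of large entries plus a first-moment tightness bound for (3), and a direct product computation using $LM/N^2 \to \alpha/(1+\alpha)^2$ for (4). The paper in fact simply states that the first three claims "can be proved in exactly the same way as Lemma 4.2 (leaagagag)" and only writes out the computation for $T_1$, so your argument fills in precisely the omitted details. You also correctly flag that the displayed identity is the distribution function of $T_1$, i.e. $\mathbb{P}(T_1\leq x)=e^{-c\alpha x^{-4}/(1+\alpha)^2}$; the $\geq$ in the lemma statement (and in the paper's own proof, which writes $\mathbb{P}(T_1\geq x)=\prod_{i,j}\mathbb{P}(|(S_{L,M})_{ij}|\geq x\sqrt{N})$ when the product is visibly the probability that \emph{all} entries are small) is a typo carried over from the definition of $\xi_{c,\alpha}$ in Theorem 1.9.
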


\begin{proof} The first three claims can be proved in exactly the same way as Lemma \ref{leaagagag}, hence omitted. We proceed to characterize the distribution of $T_1$: by independence,
\begin{equation}
    \mathbb{P}(T_1\geq x)=\prod_{i,j}\mathbb{P}(|(S_{L,M})_{ij}|\geq x\sqrt{N})=(1-c\frac{1}{N^{2}x^{4}})^{LM}{\underset{N\to\infty}\longrightarrow} e^{-\frac{c\alpha x^{-4}}{(1+\alpha)^2}}.
\end{equation}

\end{proof}

\end{enumerate}    
\end{Sampling}

\subsection{Finite rank perturbations}
We begin with the following technical computation.
\begin{Proposition}\label{proposition4.3} Let $H_N=(h_{ij})$ be a $L\times M$ matrix with i.i.d. elements, such that $\mathbb{E}[h_{ij}]=0,$ $\mathbb{E}[|h_{ij}|^2]=1$, and for each $p>1$, $\mathbb{E}[|h_{ij}|^p]<\infty.$ Let $A_N$ be a $L\times M$ nonrandom matrix with rank $O(1)$ independent of $M$, and that in any row or column of $A_N$ there is at most one nonzero element. Assume that the multi-set $\{|(A_N)_{ij}|,1\leq i\leq L,i\leq j\leq M\}$ is independent of $M$, and denote by $x:=\max \{|(A_N)_{ij}|,1\leq i\leq L,1\leq j\leq M\}$. Consider 
$$
K_N^A:=\begin{pmatrix}  0& \frac{1}{\sqrt{N}}H_N+A_N\\\frac{1}{\sqrt{N}}(H_N)^*+(A_N)^*&0\end{pmatrix},
$$ then as $N\to\infty$, $\lambda_1(K_N^A)$ converges in distribution to $F_\alpha(x)$, where $F_\alpha$ is a deterministic function defined in \eqref{functionoffalpha} that depends only on $\alpha$.
\end{Proposition}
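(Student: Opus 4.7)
The plan is to identify $\lambda_1(K_N^A)$ with the top singular value of $\tfrac{1}{\sqrt{N}} H_N + A_N$, rewrite $K_N^A$ as a finite-rank deterministic perturbation of a Hermitian random matrix, and apply the finite-rank perturbation framework of \cite{brailovskaya2022universality} as was done in Proposition \ref{mesoscopicfora} and Proposition \ref{finiterankmodified}. Since $K_N^A$ has zero diagonal blocks, its nonzero eigenvalues are exactly $\pm$ the singular values of $\tfrac{1}{\sqrt{N}} H_N + A_N$, so $\lambda_1(K_N^A) = s_1(\tfrac{1}{\sqrt{N}} H_N + A_N)$. Equivalently, one writes $K_N^A = K_N^0 + \widetilde A_N$, where
$$K_N^0 := \begin{pmatrix} 0 & H_N/\sqrt{N} \\ H_N^*/\sqrt{N} & 0\end{pmatrix}, \qquad \widetilde A_N := \begin{pmatrix} 0 & A_N \\ A_N^* & 0\end{pmatrix},$$
and $\widetilde A_N$ is a deterministic Hermitian matrix of finite rank, independent of $N$.

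The structural hypothesis on $A_N$ (at most one nonzero entry per row and per column) means $A_N$ is a weighted subpermutation matrix, whose nonzero singular values are exactly $\{|(A_N)_{ij}|\}$ with top value $x$; correspondingly $\widetilde A_N$ has nonzero eigenvalues $\pm\{|(A_N)_{ij}|\}$ and top eigenvalue $x$. The unperturbed matrix $K_N^0$ has bounded operator norm with high probability, and its limiting spectral distribution (via $M/L \to \alpha$) is the symmetrization of $\sqrt{L/N}$ times the square root of the Marchenko-Pastur law \eqref{pasteurlaw}, with an explicit Stieltjes transform satisfying a cubic equation. The all-moments assumption on $h_{ij}$ makes the verification $R(K_N^0) \to 0$ immediate, so the universality principle of \cite{brailovskaya2022universality} applies: the spectrum of $K_N^A$ is asymptotically indistinguishable from that of $W_N + \widetilde A_N$, where $W_N$ is the Gaussian Hermitian matrix with the same variance profile as $K_N^0$. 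This reduces the problem to a classical BBP-type statement in the Gaussian case.

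The final step is the classical BBP computation for $W_N + \widetilde A_N$, yielding $\lambda_1(W_N + \widetilde A_N) \to F_\alpha(x)$ for a deterministic function $F_\alpha$ read off from the Stieltjes transform of the Marchenko-Pastur law --- precisely the function whose explicit form involves solving the cubic equation foreshadowed after the statement of Theorem \ref{theorem1.9}. The main obstacle is extracting a clean closed-form expression for $F_\alpha$; fortunately, the subpermutation structure of $A_N$ makes $\widetilde A_N$ block-diagonalize into independent rank-$2$ blocks of the form $\bigl(\begin{smallmatrix} 0 & |(A_N)_{ij}| \\ |(A_N)_{ij}| & 0 \end{smallmatrix}\bigr)$, so the finite-rank case reduces via Weyl interlacing to a collection of independent rank-one outliers, each governed by the same classical rectangular BBP formula. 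A secondary technical point is the definition of $F_\alpha$ for $x$ below the BBP threshold, where the value $(1+\sqrt{\alpha})/\sqrt{1+\alpha}$ (the right edge of the limiting spectrum of $K_N^0$) must appear --- but this too is standard once the above framework is in place.
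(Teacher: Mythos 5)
Your structural setup is sound (eigenvalues of $K_N^A$ are $\pm$ the singular values of $\frac{1}{\sqrt{N}}H_N + A_N$, and $\widetilde A_N$ has eigenvalues $\pm|(A_N)_{ij}|$), and the idea of reducing to the Gaussian case via \cite{brailovskaya2022universality} and then invoking a BBP-type result is a legitimate alternative route to the paper's argument. However, it has a real gap at exactly the place where the content of the proposition lives: you never actually derive the function $F_\alpha$. The entire point of the proposition is to identify the explicit location of the outlier, namely the solution of $z^2(1+\alpha)^2 G_{MP(\alpha)}((1+\alpha)z^2)\,G_{MP(\alpha^{-1})}((1+\alpha)z^2) = x^{-2}$; the formula $F_\alpha$ is literally defined in this proof, not somewhere else in the paper. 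Invoking a ``classical rectangular BBP formula'' begs the question, because the correct spike equation for the chiral $2\times 2$ block structure is \emph{not} the usual Hermitian BBP formula $g(z)=1/\theta$ with $g$ the Stieltjes transform of the bulk; instead, the resolvent of $K_N^0$ splits into blocks $R_{11}$ (on $\mathbb{R}^L$) and $R_{22}$ (on $\mathbb{R}^M$) with \emph{different} limits, and the determinant condition factorizes into $m_1(z)\,m_2(z) = 1/a_{jk}^2$ where $m_1,m_2$ are these two distinct diagonal-block Stieltjes transforms and the off-diagonal blocks $R_{12},R_{21}$ vanish. Proving this requires the isotropic local law for the linearized sample covariance resolvent (the paper cites \cite{WOS:000542157900013}, Lemma 5.7 and \cite{articlelocallaw}); it does not simply drop out of a rank-one GOE BBP after Weyl interlacing.

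A second, smaller issue: your ``reduction via Weyl interlacing to independent rank-one outliers'' is not quite right. Weyl's inequalities give perturbation bounds, not decoupling. The decoupling of distinct spikes into separate rank-one conditions comes from the determinant factorization (the block-determinant collapses to $\det(I - m_1 m_2 A_N A_N^*) = 0$, which factors over singular values of $A_N$), and this again requires the resolvent computation you have deferred. The paper's proof works directly with the resolvent of $K_N$ under the all-moments hypothesis and does not pass through a Gaussian comparison at all; your Gaussian reduction is harmless but does not save you the key analytic step. To repair the proposal, you would need to either (i) reproduce the determinant/isotropic-local-law computation that yields equation \eqref{functionoffalpha}, or (ii) cite a precise rectangular BBP result (e.g., the Benaych-Georges--Nadakuditi singular-value theorem) and verify that its answer matches $F_\alpha$ as defined by \eqref{functionoffalpha}, including the sub-threshold value $(1+\sqrt\alpha)/\sqrt{1+\alpha}$.
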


\begin{proof} We follow some computations in \cite{WOS:000542157900013}, Section 5.2. Let $K_N$ denote the matrix resulting from $K_N^A$ by setting $A_N=0$. Then $z\in\mathbb{R}$ is an eigenvalue of $K_N^A$ if and only if $\det(z\operatorname{I}_N-K_N^A)=0$. If $z$ lies outside the spectrum of $K_N$ (note that the top eigenvalue of $K_N$ is no larger than the square root of the top eigenvalue of $\frac{1}{N}H_N(H_N)^*$, so this holds for any $z>\frac{1+\sqrt{\alpha}}{\sqrt{1+\alpha}}$), then this implies 

\begin{equation}\label{equationfordeterminant}\det\left(\operatorname{I}_N-(z\operatorname{I}_N-K_N)^{-1}\begin{pmatrix}0&A_N\\A_N^*&0\end{pmatrix}\right)=0.\end{equation}

We rewrite $$(z\operatorname{I}_N-K_N)^{-1}=\begin{pmatrix} R_{11}(z)&R_{12}(z)\\R_{21}(z)&R_{22}(z)\end{pmatrix},$$
then the condition on $z$ can be restated as 
$$
\det\left(\operatorname{I}_N-\begin{pmatrix}  R_{12}(z)A_N^*&  R_{11}(z)A_N\\  R_{22}(z)A_N^*& R_{21}(z)A_N \end{pmatrix}\right)=0.
$$

By \cite{WOS:000542157900013}, Lemma 5.7, we have that for any $\delta>0$,
$$
\sup_{\substack{\|v\|=\|w\|=1,\\v\in\mathbb{R}^M,w\in\mathbb{R}^L}}\mathbb{P}\left(\sup_{z\geq \epsilon+\frac{1+\sqrt{\alpha}}{\sqrt{1+\alpha}}} \left|\langle v,R_{2,1}(z)w\rangle\right|>\delta
\right)\to 0,$$
and that from \cite{WOS:000542157900013}, Lemma 5.7 and using the isotropic local law from \cite{articlelocallaw}, Theorem 2.5:
$$
\sup_{\substack{\|v\|=\|w\|=1,\\v,w\in\mathbb{R}^L}}\mathbb{P}\left(\sup_{z\geq \epsilon+\frac{1+\sqrt{\alpha}}{\sqrt{1+\alpha}}} \left|\langle v,R_{1,1}(z)w\rangle-z(1+\alpha)\langle v,w\rangle G_{MP(\alpha)}((1+\alpha)z^2)\right|>\delta
\right)\to 0,$$

$$
\sup_{\substack{\|v\|=\|w\|=1,\\v,w\in\mathbb{R}^M}}\mathbb{P}\left(\sup_{z\geq \epsilon+\frac{1+\sqrt{\alpha}}{\sqrt{1+\alpha}}} \left|\langle v,R_{2,2}(z)w\rangle-z(1+\alpha)\langle v,w\rangle G_{MP(\alpha^{-1})}((1+\alpha)z^2)\right|>\delta
\right)\to 0,$$
where $G_{MP(\alpha)}(z)$ is the Stieltjes transform for the $\alpha$- Marchenko–Pastur law \eqref{pasteurlaw}.

Taking these computations back to \eqref{equationfordeterminant}, we see that as $N\to\infty$, a solution $z>\frac{1+\sqrt{a}}{\sqrt{1+a}}$ to \eqref{equationfordeterminant} must asymptotically satisfy
\begin{equation}
    z(1+\alpha)G_{MP(\alpha)}((1+\alpha)z^2)\times \left(z(1+\alpha)G_{MP(\alpha^{-1})}((1+\alpha)z^2)\right)=\frac{1}{a_{jk}^2}
\end{equation}
for some nonzero element $a_{jk}$ appearing in the matrix $A_N$.

This equation is hard to solve explicitly for general $\alpha$, but we have the following properties: both $z\to zG_{MP(\alpha)}((1+\alpha)z^2)$ and $z\to zG_{MP(\alpha^{-1})}((1+\alpha)z^2)$ are positive and monotonically decreasing on $z\in(\frac{1+\sqrt{\alpha}}{\sqrt{1+\alpha}},\infty),$ so that there exists a unique $\tau_\alpha>0$ such that for any $x>\tau_\alpha$, there is a unique $z\in (\frac{1+\sqrt{\alpha}}{\sqrt{1+\alpha}},\infty)$ that solves 
\begin{equation}\label{equationx}
    z^2(1+\alpha)^2G_{MP(\alpha)}((1+\alpha)z^2)G_{MP(\alpha^{-1})}((1+\alpha)z^2)=\frac{1}{x^2},
\end{equation}
and we define $F_\alpha(x)$ for any $x>0$ as follows:
\begin{equation}\label{functionoffalpha}
    F_\alpha(x):=\begin{cases}  \text{ the unique solution $z>\frac{1+\sqrt{\alpha}}{\sqrt{1+\alpha}}$ to \eqref{equationx}},\quad x>\tau_\alpha,\\
    \frac{1+\sqrt{\alpha}}{\sqrt{1+\alpha}},  \quad x<\tau_\alpha.
    \end{cases}
\end{equation}
Clearly we have
\begin{equation}
    \tau_\alpha=\frac{1}{\sqrt{(1+\alpha)(1+\sqrt{\alpha})^2G_{MP(\alpha)}((1+\sqrt{\alpha})^2)G_{MP(\alpha^{-1})}((1+\sqrt{\alpha})^2)}}
\end{equation}
This completes the proof.
\end{proof}

\begin{remark}
    As a reality check, we may take $\alpha=1$, and we have that 
    $$
2zG_{MP(1)}(2z^2)=z-\sqrt{z^2-2},
    $$ and a solution to $z-\sqrt{z^2-2}=\frac{1}{a}$ is given by  $z=a+\frac{1}{2a}$. In this choice $\alpha=1$, the matrix $K_N$ is indeed the weighted adjacency matrix of a $M$-regular graph, while we have divided by $\sqrt{N}$ rather than $\sqrt{M}$, which differ by a multiple of $\sqrt{2}$. The solution $z=a+\frac{1}{2a}$ exactly gives the finite rank perturbation result in Proposition \ref{theorem3.456} (modulo the fact that in our scaling the variance of each matrix entry is multiplied by $\frac{1}{2}$). Meanwhile, $G_{MP(1)}(4)=\frac{1}{2},$ so that $\tau_\alpha=\frac{1}{\sqrt{2}}$,  also consistent with Proposition \ref{theorem3.456}.
\end{remark}

Now we show the moment conditions in Proposition \ref{proposition4.3} can be weakened.

\begin{Proposition}\label{proposition4.5} In the setting of Theorem \ref{theorem1.9}, consider the matrix $S_{L,M}^S$ defined in the sampling procedure \ref{sampling4.1}. Then all the claims of Proposition \ref{proposition4.3} remain true if we replace $\frac{1}{\sqrt{N}}H_N$ by $S_{L,M}^S$.
\end{Proposition}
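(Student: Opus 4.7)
The plan is to mirror the argument used in Proposition \ref{finiterankmodified}, deducing the claim from Proposition \ref{proposition4.3} via the universality principle of \cite{brailovskaya2022universality}. The key point is that Proposition \ref{proposition4.3} provides a ``reference'' finite rank perturbation result whose conclusion depends only on the mean and covariance profile of the matrix, so any matrix with the same variance profile and sufficiently small $R(\cdot)$ parameter must exhibit the same limiting top eigenvalue.

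First, I would pass from $\mathcal{S}_{L,M}^S$ to a rescaled matrix with the correct variance profile. Because each entry of $\mathcal{S}_{L,M}^S$ has variance $\frac{1}{N}\mathrm{Var}\big((S_{L,M})_{ij}1_{|(S_{L,M})_{ij}|\leq Q_N}\big)$, which tends to $\frac{1}{N}$ as $N\to\infty$, there exists $a_N\to 1$ with $a_N>1$ such that each entry of $a_N\mathcal{S}_{L,M}^S$ has variance exactly $\frac{1}{N}$. By construction of the sampling procedure, the uniform bound $|(\mathcal{S}_{L,M}^S)_{ij}|\leq Q_N/\sqrt{N}=(\log N)^{-5}$ holds almost surely, so
\[
R(a_N\mathcal{S}_{L,M}^S)=O\big((\log N)^{-5}\big),
\]
which satisfies the smallness condition $(\log N)^2R(\cdot)\to 0$ required by the universality principle.

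Next, form the symmetrized block matrix
\[
\widetilde{K}_N^A:=\begin{pmatrix} 0 & a_N\mathcal{S}_{L,M}^S+A_N\\ a_N(\mathcal{S}_{L,M}^S)^*+A_N^* & 0\end{pmatrix},
\]
and compare it to $K_N^A$ of Proposition \ref{proposition4.3}. The two matrices share the same expectation (namely the symmetrization of $A_N$) and the same entry-wise covariance profile. Using Corollary 2.6 and Theorem 2.7 of \cite{brailovskaya2022universality} (or, if one wishes to track eigenvectors, by rewriting the proof of Theorem 3.31 there with the high-moment matrix $H_N$ in place of the GOE reference, as done in the proof of Proposition \ref{finiterankmodified}), one obtains that the Hausdorff distance between the spectra of $\widetilde{K}_N^A$ and $K_N^A$ tends to $0$ in probability. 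Applying Proposition \ref{proposition4.3} to $K_N^A$ then yields $\lambda_1(\widetilde{K}_N^A)\to F_\alpha(x)$ in distribution.

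Finally, use Weyl's interlacing inequality together with $\|(a_N-1)\mathcal{S}_{L,M}^S\|\to 0$ in probability (which again follows from \cite{brailovskaya2022universality}, Corollary 2.6, applied to $\mathcal{S}_{L,M}^S$) to deduce that replacing $a_N\mathcal{S}_{L,M}^S$ by $\mathcal{S}_{L,M}^S$ only perturbs the top eigenvalue by $o(1)$, completing the proof. The main subtlety is ensuring the universality principle transports not only the bulk Hausdorff distance estimate but also the outlier location, which requires that the finite rank deterministic matrix $A_N$ be identical on both sides of the comparison; this is automatic here since $A_N$ is the same non-random matrix in both $\widetilde{K}_N^A$ and $K_N^A$, so universality applies directly to the shifted matrices. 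Should one also wish to extract eigenvector overlaps (as mentioned in Lemma \ref{lemmaline422}), the small norm perturbation $(a_N-1)\mathcal{S}_{L,M}^S$ can be removed by the same smooth-functional-calculus argument used there.
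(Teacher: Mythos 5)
Your proposal is correct and follows essentially the same route as the paper: rescale by $a_N\to1$ to match the variance profile, observe $R(a_N\mathcal{S}_{L,M}^S)=O((\log N)^{-5})$ so the universality principle of Brailovskaya--Van Handel applies to compare the block matrices built from $a_N\mathcal{S}_{L,M}^S$ and $\frac{1}{\sqrt N}H_N$ with the same finite-rank shift $A_N$, and finally remove $(1-a_N)\mathcal{S}_{L,M}^S$ by Weyl interlacing together with the high-probability operator-norm bound from Corollary 2.6. The observation at the end, that the deterministic perturbation $A_N$ must agree on both sides, is exactly the implicit point the paper relies on, and your remark on eigenvector overlaps is consistent with the paper's stated scope.
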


\begin{proof}
We can find a sequence of constants $a_N\to 1$ as $N\to\infty$ such that $a_NS_{L,M}^S$ has the same mean and variance profile as the matrix $H_N$ in Proposition \ref{proposition4.3}. Moreover, by construction $R(S_{L,M}^S)\leq (\log N)^{-5}$, hence we can apply the universality principle in \cite{brailovskaya2022universality}, Theorem 2.7 to conclude that both $
\begin{pmatrix}  0& \frac{1}{\sqrt{N}}H_N\\\frac{1}{\sqrt{N}}(H_N)^*&0\end{pmatrix}
$ and $
\begin{pmatrix}  0& a_NS_{L,M}^S\\(a_NS_{L,M}^S)^*&0\end{pmatrix},
$ have approximately the same top eigenvalue when perturbed by $\begin{pmatrix}  0& A_N\\(A_N)^*&0\end{pmatrix}.
$ The $(1-a_N)S_{L,M}^S$ term can be removed by Weyl interlacing formula and the fact that $\|S_{L,M}^S\|$ is bounded with high probability, which is also a consequence of \cite{brailovskaya2022universality}, Corollary 2.6 and Remark 2.1 (extension to non self-adjoint matrices).
\end{proof}

\subsection{Completing the proof} We can now complete the proof of Theorem \ref{theorem1.9}.

\begin{proof}[\proofname\ of Theorem \ref{theorem1.9}] Through the sampling procedure \ref{sampling4.1}, we first sample the labels $S$, $L$ and sample entries in $S_{L,M}^L$. The distribution of  $S_{L,M}^S$ is not changed under this sampling, and on an event $\Omega_n\subset\Omega$ measurable with respect to sampling, with $\mathbb{P}(\Omega_n)\to 1$, we have $\|S_{L,M}^N\|\leq (\log N)^{-5}$ and is thus negligible. Then we apply Proposition \ref{proposition4.5} with $S_{L,M}^{L,\geq c}$ in place of $A_N$ (knowing that the largest absolute value of elements in $S_{L,M}^{L,\geq c}$ converges to $\xi_{c,\alpha}$, we only need to add a matrix $A_N^1$ with vanishing  operator norm so that  $S_{L,M}^{L,\geq c}+A_N^1$ has $N$-independent nonzero elements, and the addition of $A_N^1$ can be ignored by Weyl interlacing inequality). Noting that 
    $\|S_{L,M}^N\|\to 0$, that the limit is independent of $c>0$ small so we can set $c\to 0$, and that we have the decomposition \eqref{indistributionmode}, now the proof of Theorem \ref{theorem1.9} is complete by the first paragraph in the beginning of Section \ref{1section4}.
\end{proof}

\printbibliography

\end{document}